\def\wid{\operatorname{wd}}
\definecolor{red}{rgb}{1.00,0.00,0.00}
\newtheorem{theorem}{Theorem}[section]
\newtheorem{lemma}[theorem]{Lemma}
\newtheorem{corollary}[theorem]{Corollary}
\newtheorem{proposition}[theorem]{Proposition}
\newtheorem{example}[theorem]{Example} 
\newtheorem{remark}[theorem]{Remark}
\newtheorem{definition}[theorem]{Definition}
\newtheorem{question}{Question}
\newtheorem{notation}{Notation}
\newtheorem*{acknowledgment*}{Acknowledgment}
\def\N{\mathbb{N}}
\newcommand{\bs}{\setminus}
\newcommand{\q}{\quad}
\newcommand{\la}{\langle}
\newcommand{\ra}{\rangle}
\newcommand*{\SM}[1]{S^e(#1)}
\newcommand*{\IM}[1]{I^e(#1)}
\newcommand*{\RM}[1]{R^e(#1)}
\renewcommand*{\S}[2]{S_{#1}(#2)}
\newcommand*{\Se}[2]{S^e_{#1}(#2)}
\renewcommand*{\Re}[2]{R^e_{#1}(#2)}
\newcommand*{\Ie}[2]{I^e_{#1}(#2)}
\begin{document}

\title{Structure and Symmetry of Sally type Semigroup Rings}

\author{Srishti Singh}
\address{University of Missouri, USA}
\email{spkdq@umsystem.edu}
\author{Hema Srinivasan}
\address{University of Missouri, USA}
\email{srinivasanh@missouri.edu}
\keywords{Sally type semigroups, Betti numbers, Frobenius number, symmetric and almost symmetric semigroups.}
\thanks{2020 {\em Mathematics Subject Classification}. Primary 13D02, 13D05; Secondary 20M14, 13H10}
 
\begin{abstract}
Consider a numerical semigroup minimally generated by a subset of the interval $[e,2e-1]$ with multiplicity $e$ and width $e-1$. Such numerical semigroups are called Sally type semigroups. We show that the defining ideals of these semigroup rings, when the embedding dimension is $e-2$, generically have the structure of the sum of two determinantal ideals. More generally, Sally type numerical semigroups with multiplicity $e$ and embedding dimension $d=e-k$ are obtained by introducing $k$ gaps in the interval $[e,2e-1]$.  It is known that for $k =2$, there is precisely one such semigroup that is Gorenstein, and it happens when one deletes consecutive integers. Let $\Se{k}{j}$ denote the Sally type numerical semigroup of multiplcity $e$, embedding dimension $e-k$ obtained by deleting the $k$ consecutive integers $j, j+1, \ldots, j+k-1$.We prove that for any $1\le k < e/2$, the semigroup $\Se{k}{j}$ is Gorenstein if and only if $j=k$. We construct an explicit minimal free resolution of the semigroup ring of $\Se{k}{k}$ and compute the Betti numbers.  In general, we characterize when $\Se{k}{j}$ are symmetric and construct minimal resolutions for these Gorenstein semigroups rings.  
\end{abstract}

\maketitle

\section{Introduction}
Numerical semigroups of multiplicity $e$ and width $e-1$ are called \textit{Sally type semigroups}. These semigroups are carved out of the interval semigroups $\la e,e+1,...,2e-1 \ra$ by introducing gaps between $e$ and $e-1$.  The notion of Sally semigroups was coined by Herzog and Stamate for the symmetric Sally type semigroups. The study of such semigroups is motivated by a remarkable result of J. Sally in which she introduces a specific numerical semigroup in order to construct a Gorenstein ideal of multiplicity two more than the embedding dimension whose associated graded ring is Cohen–Macaulay (\cite{Sally}). In particular, Sally showed that when a one-dimensional Gorenstein ring has multiplicity $e$ and embedding dimension $e-2$, its defining ideal corresponds to a symmetric numerical semigroup; Herzog and Stamate called these semigroups Sally and showed that $S=\la e,e+1,e+4,...,2e-1 \ra$, Sally's original construction, is indeed such a semigroup (\cite{HerzogStamate}). 

The goal of this paper is to understand the structure of Sally type semigroups and determine when they are symmetric.  The interval semigroups have been well-studied (\cite{Rosales}, \cite{HerzogStamate}). Sally type semigroups are precisely those generated by a subset of an interval $[e,2e-1]$ of maximal embedding dimension. What happens when we take out one generator from this interval? Two generators? $k$ generators? The beginning of the story is in Sally's paper (\cite{Sally}), where her example sheds light on the $k=2$ case. In \cite{DGS3}, it was shown that Sally’s semigroup is the only symmetric Sally type semigroup obtained by deleting two generators, and a minimal generating set for its defining ideal was computed. Continuing the story in \cite{DGS2}, the authors study the cases $k=1$ and $k=2$, and construct minimal free resolutions for the symmetric semigroups here. In \cite{Bhardwaj25}, the projective monomial curves associated to the $k=2$ Sally type semigroups are investigated, and characterizations of when these curves are Cohen–Macaulay or Gorenstein are obtained.

In this paper, we begin with the interval semigroup $\la e,e+1,...,2e-1 \ra$ and remove $k$ consecutive generators. We describe a determinantal structure for the resulting Sally type semigroup rings, identify which among them are symmetric, and construct their minimal free resolutions. The embedding dimension $\nu$ of these semigroup rings always satisfies $\nu \le e-1$. In the extremal case $\nu =e-1$, i.e., for the Sally type semigroups $$\la e,e+1,...,2e-2\ra \q \text{or} \q \la e+1,e+2,...,2e-1\ra$$ having maximum possible embedding dimension $e-1$, the generators form an arithmetic sequence, and the semigroup ring is defined by a binomial determinantal ideal. When $\nu =2$, we show that the defining ideal can be expressed as the ideal of $2 \times 2$ minors of two matrices. We settle the symmetric classification for all $k \leq \frac{e}{2}$, proving that for each such $k$, there is exactly one Sally type semigroup that is symmetric, and we compute minimal free resolutions in these cases. If $k> \frac{e}{2}$, we characterize when these are symmetric and prove that there is atmost one Sally type semigroup that is symmetric and when there is one, the minimal resolutions can be found in the same way as $j=k \le \frac{e}{2}$ and surprisingly, the Betti numbers are exactly the same. 

\vspace{1cm}

\section{Determinantal Structure for Sally Type Semigroups of Embedding Dimension $e-2$}

Let $[t,s]$ denote the set of non-negative integers $i$ with $t\le i\le s$. A numerical semigroup $S$ is a submonoid of $\N$ such that $S \bs \N$ is finite. Write $S= \la s_1,...,s_t \ra$ to denote that $S$ is minimally generated by $s_1,...,s_t \in \N$ and so $\gcd(s_1,...,s_t)=1$. The difference $\wid(S)=s_t-s_1$ is the \textit{width} of $S$, while $s_1$ is its \textit{multiplicity} and $t$ is its \textit{embedding dimension}.

\begin{definition}
    A numerical semigroup is a \textit{Sally Type Semigroup} if it has multiplicity $e$ and width $e-1$.
\end{definition}

\begin{remark}
    A Sally type semigroup has the minimal generating set $$[e,2e-1] \bs \{e+j_1,e+j_2,...,e+j_k\}, \q 1 < j_1 <...< j_k < e-1.$$
\end{remark}

In this section, we give a determinantal structure for the ideals defining these semigroup rings when $k=2$. A complete description of a minimal generating set for these ideals is found in \cite{DGS3}, where the minimal number of generators for these ideals is computed using Hochster's formula, and the minimal generators have been explicitly described. Here, we will use the following beautiful criterion by W. Gastinger \cite{Gastinger} on numerical semigroups. We state a simplified form that will suffice for our purposes with a hint of proof. 

\begin{theorem}[\cite{Gastinger}]\label{thm:J=I}
    If $S = \la a_1, \ldots, a_t \ra$ is a numerical semigroup and $I \subset k[x_1, \ldots, x_t]$ is its defining ideal, then any other binomial ideal $J \subset I$  is, in fact, equal to $I$ if the $k$-vector space dimension of $k[x_1, \ldots, x_t]/(J, x_1)$ is $a_1$.
\end{theorem}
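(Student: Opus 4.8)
The plan is to compare $I$ and $J$ after reducing modulo $x_1$, where the hypothesis becomes an exact numerical match, and then to lift that equality to the full ideals by exploiting the grading. Recall that $I$ is the kernel of the $k$-algebra map $\phi\colon k[x_1,\ldots,x_t]\to k[T]$ with $\phi(x_i)=T^{a_i}$, so $I$ is prime, $x_1\notin I$, and $k[x_1,\ldots,x_t]/I\cong k[S]$. Assigning $\deg x_i=a_i$ makes $\phi$ homogeneous and $I$ a homogeneous ideal for this positive grading. First I would compute $\dim_k k[x_1,\ldots,x_t]/(I,x_1)$: this quotient is $k[S]/(T^{a_1})$, whose monomial $k$-basis is indexed by $S\bs(a_1+S)=\{s\in S:\ s-a_1\notin S\}$, the Apéry set $\operatorname{Ap}(S,a_1)$. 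Since $\operatorname{Ap}(S,a_1)$ contains exactly one element in each residue class modulo $a_1$, we get $\dim_k k[x_1,\ldots,x_t]/(I,x_1)=a_1$.

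Next, from $J\subseteq I$ I obtain $(J,x_1)\subseteq(I,x_1)$, hence a surjection
\[
k[x_1,\ldots,x_t]/(J,x_1)\twoheadrightarrow k[x_1,\ldots,x_t]/(I,x_1).
\]
By hypothesis the source has $k$-dimension $a_1$, and by the previous step so does the target; a surjection of finite-dimensional vector spaces of equal dimension is an isomorphism, so its kernel $(I,x_1)/(J,x_1)$ vanishes and $(J,x_1)=(I,x_1)$.

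It remains to upgrade $(J,x_1)=(I,x_1)$ to $J=I$, which is where I expect the real work to lie. Since $I$ is prime and contains no monomial, any binomial lying in $I$ must be a pure difference $x^\alpha-x^\beta$ with $\sum\alpha_ia_i=\sum\beta_ia_i$, hence $S$-homogeneous; as $J$ is generated by binomials inside $I$, the ideal $J$ is homogeneous, and $M:=I/J$ is a finitely generated graded module over the positively graded ring $A:=k[x_1,\ldots,x_t]/J$, on which $\bar x_1$ acts in degree $a_1>0$. The equality $(I,x_1)=(J,x_1)$ translates into $M=\bar x_1M$: given $f\in I$ write $f=j+x_1h$ with $j\in J$, so $x_1h=f-j\in I$, whence $h\in I$ by primality and $x_1\notin I$, and reducing modulo $J$ yields $\bar f=\bar x_1\bar h$ with $\bar h\in M$. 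As $M$ is graded, finitely generated, and bounded below while $\bar x_1$ raises degree, graded Nakayama forces $M=0$, since a nonzero homogeneous element of least degree could not lie in $\bar x_1M$; therefore $J=I$. The only genuinely delicate point is this final lifting: the Apéry count and the reduction modulo $x_1$ are routine, but passing from equality of the quotients by $x_1$ back to the full ideals fails without a grading, so the observation that $J$ is automatically $S$-homogeneous, together with the graded Nakayama argument, is the crux.
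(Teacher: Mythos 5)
Your proposal is correct, and its first half coincides with the paper's opening move: the surjection $k[x_1,\ldots,x_t]/(J,x_1)\twoheadrightarrow k[x_1,\ldots,x_t]/(I,x_1)$ is forced to be an isomorphism by the dimension count, giving $(J,x_1)=(I,x_1)$; you additionally supply the Ap\'ery set computation showing $\dim_k k[x_1,\ldots,x_t]/(I,x_1)=a_1$, which the paper uses without proof. Where you genuinely diverge is in lifting $(J,x_1)=(I,x_1)$ to $J=I$. The paper argues element by element: it picks a binomial $a-b\in I\setminus J$ of smallest weighted degree, splits into cases according to whether $x_1$ divides $a$, and uses primality of $I$ together with homogeneity of $J$ to produce a smaller-degree counterexample. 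You package the same ingredients (primality of $I$, $x_1\notin I$, the weighting $\deg x_i=a_i$) into one module-theoretic statement: $I=J+x_1I$, so $M:=I/J$ satisfies $M=\bar{x}_1M$, and graded Nakayama (a lowest-degree element of $M$ cannot lie in $\bar{x}_1M$) kills $M$. This buys real advantages: you never need the fact that the toric ideal $I$ is generated by binomials, which the paper implicitly requires in order to know that $I\neq J$ yields a binomial of $I$ outside $J$; there is no case analysis; and you sidestep the delicate spot in the paper's descent where the element $a'-d$ it produces need not itself be a binomial, so invoking ``minimality of $a-b$'' strictly requires an extra homogeneous-decomposition step. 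The paper's argument, in exchange, stays entirely at the level of explicit binomials and is more elementary in flavor. Both proofs ultimately rest on the crux you single out: $J$ is automatically homogeneous for the weighting $\deg x_i=a_i$, since a binomial lying in the prime, monomial-free ideal $I$ must be a difference of monomials of equal weight.
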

\begin{proof}
Set $R:=k[x_1,...,x_t]$. Since $J+(x_1) \subseteq I+(x_1)$, we get a surjective $R$-module homomorphism $$\frac{R}{(J,x_1)} \to \frac{R}{(I,x_1)}.$$ In fact, this is a surjective map between $k$-vector spaces as any $R$-module is a $k$-module. Since $$\dim_k \left( \frac{R}{(J,x_1)} \right) = a_1 = \dim_k \left( \frac{R}{(I,x_1)} \right),$$
this map is an isomorphism. Thus, $(I,x_1)=(J,x_1)$.  Now, if $I\neq J$, let $a-b $ be a binomial of smallest weighted degree in $I$ that is not in $J$.  If $x_1$ divides $a$, then for some $a' \in R$, $x_1a'-b \in I \implies b\in I+(x_1) = J+(x_1)$.  Hence $b = c+x_1d $ for some $c\in J$.  Since $J$ is homogeneous, we can take $c$ and $x_1d$ to be of the same degree as $b$.    So, $b-x_1d \in J \subset I$.  Hence $x_1(a'-d)\in I$.  But $I$ is prime. So $a'-d\in I$, contradicting the minimality of $a-b$.  So, $x_1$ does not divide $a$ or $b$. This implies $a-b = c+x_1d$ where $c\in J$. So $a-b-x_1d \in J \subset I$, which further implies $d\in I$ and is of smaller degree. It now follows that $d\in J$. So, $a-b\in J$ , a contradiction.  Thus, $I = J$.

\end{proof}

\begin{notation}\label{not:A}
Let $\SM{m,n}$ denote the Sally-type numerical semigroup of embedding dimension $e-2$, width $e-1$, and multiplicity $e$.  To be precise, $\SM{m,n} = \la e, \ldots, e+m-1, e+m+1, \ldots, e+n-1, e+n+1, \ldots, 2e-1 \ra. $ 

Let $k$ be a field, and $\RM{m,n}:= k[X_i, i \in [0,e-1]\bs\{m,n\}]$ a polynomial ring in $e-2$ variables. The semigroup ring of $\SM{m,n}$ is $$k[\SM{m,n}] \cong \displaystyle{\frac{\RM{m,n}}{\IM{m,n}}},$$ where $\IM{m,n}$, called the defining ideal of $\SM{m,n}$, is the kernel of the $k$-algebra homomorphism $\RM{m,n} \to k[t]$ defined by $X_i \mapsto t^{e+i}$ for $i \in [0,e-1]\bs\{m,n\}$. The ring $k[\SM{m,n}]$ is weighted homogenous with the grading $X_i = e+i$.  For $0\le t\le 2e-2$, let $X_{e+2t} = X_t^2, X_{e+2t-1} = X_tX_{t-1}$.  Thus, the degree of $X_i = e+i, 0\le i\le 3e-2$.  Let  $I_2(M)$ denote the ideal of $2 \times 2$ minors of a matrix $M$.

\end{notation}

\begin{theorem}\label{thm:remove2determ}
    $\IM{m,n}= I_2(A^e(m,n))+I_2(B^e(m,n))$, where

\begin{align*}
A^e(m,n) &= \begin{bmatrix}
        X_0 & \cdots & X_{m-2} & X_{m+1} & \cdots & X_{n-2} & X_{n+1} & \cdots & X_{e-1} \\
        X_1 & \cdots & X_{m-1} & X_{m+2} & \cdots & X_{n-1}& X_{n+2} & \cdots & X_0^2
        \end{bmatrix} \\
B^e(m,n) &= \left[\begin{array}{cccccccccccccc}
          X_0 & X_1 & \cdots & X_{m-3} & {\bf {X_{m-1}}} & X_{m+1} & \cdots & X_{n-3} & {\bf {X_{n-1}}} & X_{n+1} & \cdots & X_{e-2} & X_{e-1} & X_{0}^2\\ 
          X_2 & X_{3} & \cdots & X_{m-1} & {\bf X_{m+1}} & X_{m+3} & \cdots & X_{n-1} & {\bf X_{n+1}} & X_{n+3} & \cdots & X_0^2 & X_0X_1 & X_1^2  
    \end{array}\right], \\ 
    &  \q \q  m \neq 2, (m,n) \neq (2,3) \\
B^e(2,n) &= \begin{bmatrix}
         {\bf {X_1}}  & X_3 & \cdots & X_{n-3} & {\bf {X_{n-1}}} & X_{n+1} & \cdots &  X_{e-2} & X_{e-1} & X_{0}^2\\
         {\bf { X_3 }} & X_5 & \cdots & X_{n-1} & {\bf X_{n+1}} & X_{n+3} & \cdots & X_0^2 & X_0X_1 & X_1^2 \\ 
    \end{bmatrix} +(X_0X_{n+1}-X_3X_{n-2}), n\ge 6 \\
B^e(2,3) &= \begin{bmatrix}
        {\bf {X_1}} & {X_4} & \cdots & X_{e-3} &  X_{e-2} & X_{e-1}\\
        {\bf{X_4}} & {X_7} & \cdots & X_0^2 & X_0X_1 & X_1^2 \\ 
    \end{bmatrix}  \\
B^e(2,4) &= \begin{bmatrix}
        {\bf {X_1}} & {\bf {X_3}} &  \cdots &  X_{e-2} & X_{e-1} & X_{0}^2\\
        {\bf{X_3}} & {\bf{X_5}} & \cdots & X_0^2 & X_0X_1 & X_1^2 \\ 
    \end{bmatrix} \\
B^e(2,5) &= \begin{bmatrix}
        {\bf {X_0}} & X_1 & {\bf {X_3}} &  \cdots & X_{e-3} & X_{e-2} & X_{e-1} \\
        {\bf{X_3}} & X_4 & {\bf{X_6}} & \cdots & X_0^2 & X_0X_1 & X_1^2 \\ 
    \end{bmatrix} \\
B^e(3,4) &= \begin{bmatrix}
        {\bf {X_2}} & X_5 & \cdots & X_{e-3} & X_{e-2} & X_{e-1} & X_{0}^2 & X_0X_1 \\
        {\bf{X_5}} & X_8 & \cdots & X_0^2 & X_0X_1 & X_1^2 & X_1X_2 & X_2^2 \\ 
    \end{bmatrix}
\end{align*}
\end{theorem}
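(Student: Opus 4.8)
The plan is to invoke Gastinger's criterion, Theorem~\ref{thm:J=I}, for the candidate ideal
\[
J \;:=\; I_2(A^e(m,n)) + I_2(B^e(m,n)),
\]
taking the distinguished variable to be $X_0$ and $a_1 = e$, the multiplicity of $\SM{m,n}$. Each $2\times 2$ minor of a $2\times N$ matrix with monomial entries is a difference of two monomials, so $J$ is a binomial ideal and the criterion applies once its two hypotheses are checked: that $J \subseteq \IM{m,n}$, and that $\dim_k\bigl(\RM{m,n}/(J,X_0)\bigr) = e$.

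First I would verify the containment $J \subseteq \IM{m,n}$ by a weighted-degree computation, using that $\IM{m,n}$ is the toric ideal consisting of all binomials $u-v$ with $\deg u = \deg v$ for the grading $\deg X_i = e+i$. In $A^e(m,n)$ the bottom row is the degree-$(+1)$ shift of the top row entrywise (including the corner column $(X_{e-1},X_0^2)$, since $\deg X_0^2 = 2e = (2e-1)+1$), so every minor $pq'-p'q$ has $\deg(pq') = \deg p + \deg p' + 1 = \deg(p'q)$ and hence lies in $\IM{m,n}$. In $B^e(m,n)$ the bottom row is instead the degree-$(+2)$ shift of the top row -- this is exactly the reason the bold columns pair $X_{m-1}$ with $X_{m+1}$ and $X_{n-1}$ with $X_{n+1}$ -- so the identical computation gives $I_2(B^e(m,n)) \subseteq \IM{m,n}$. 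The exceptional matrices $B^e(2,n),\,B^e(2,3),\,B^e(2,4),\,B^e(2,5),\,B^e(3,4)$ together with the extra generator $X_0X_{n+1}-X_3X_{n-2}$ are checked one by one by the same degree test.

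The heart of the argument is the equality $\dim_k\bigl(\RM{m,n}/(J,X_0)\bigr) = e$. Since $J \subseteq \IM{m,n}$ there is a surjection
\[
\RM{m,n}/(J,X_0) \twoheadrightarrow \RM{m,n}/(\IM{m,n},X_0) \cong k[\SM{m,n}]/(t^e),
\]
whose target has $k$-dimension $|\operatorname{Ap}(\SM{m,n},e)| = e$; thus $\dim_k\bigl(\RM{m,n}/(J,X_0)\bigr) \ge e$, and it remains to prove the reverse inequality by spanning the quotient with $e$ monomials. After setting $X_0 = 0$, which annihilates the corner entries $X_0,\,X_0^2,\,X_0X_1$, the minors of $A^e(m,n)$ become the straightening rules $X_aX_{b+1}\equiv X_bX_{a+1}$ and those of $B^e(m,n)$ become $X_aX_{b+2}\equiv X_bX_{a+2}$. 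Using these to push indices toward $X_1$, I would show that every monomial reduces either to $0$ or to one of the $e$ Apéry monomials $1$, $X_i$ for $i\in[1,e-1]\setminus\{m,n\}$, and the two degree-$(2e+m)$ and degree-$(2e+n)$ monomials $X_1X_{m-1}$, $X_1X_{n-1}$ (suitably modified when $n=m+1$ or in the small cases). This forces the displayed surjection to be an isomorphism, and Gastinger's criterion then yields $J = \IM{m,n}$.

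The main obstacle is precisely this reduction step: one must control how the two families of relations interact across the two gaps at $m$ and $n$, and confirm that no monomial outside the proposed $e$-element set survives -- i.e.\ that the determinantal relations, without any appeal to $\IM{m,n}$ itself, already collapse the quotient to dimension exactly $e$. I expect the degenerate and small-index matrices to require individual treatment, since there the bold columns and the extra binomial $X_0X_{n+1}-X_3X_{n-2}$ change the set of available straightening rules and shift the Apéry monomials; once the spanning set of size $e$ is confirmed in each case, the criterion closes the argument uniformly.
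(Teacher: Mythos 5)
Your proposal is correct and follows essentially the same route as the paper's own proof: containment $J\subseteq \IM{m,n}$ by the weighted-degree (column-shift) check, then Gastinger's criterion applied via the spanning set $\{1,\,X_i\ (i\in[1,e-1]\setminus\{m,n\}),\,X_1X_{m-1},\,X_1X_{n-1}\}$ for $\RM{m,n}/(J,X_0)$, with the lower bound $\ge e$ coming from the surjection onto $\RM{m,n}/(\IM{m,n},X_0)$. The paper likewise carries out the monomial reduction only in the generic case $m\neq 2$, $(m,n)\neq(2,3)$ and treats the exceptional matrices as analogous, so your plan matches its proof in both structure and level of detail.
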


\begin{proof}
Let $I_2(A^e(m,n))+I_2(B^e(m,n))= J$. Since the difference of the weighted degree of the entries in each column is the same in all the columns of the matrix $A^e(m,n)$, $I_2(A^e(m,n)) \subset \IM{m,n}$.  Similarly, the difference in the weighted degree of the entries in  every column  in $B^e(m,n)$ is the same.  Hence, the binomial ideal $J\subset \IM{m,n}$.

We will show that $R':=\RM{m,n}/(J,X_0)$ is a $k$-vector space of dimension $e$. \Cref{thm:J=I} will then imply that $J= \IM{m,n}$. By abuse of notation, denote by $f$ the image of $f \in \RM{m,n}$ in $R'$ under the projection $\RM{m,n} \to R'$. \\

\textbf{Claim:} $\{1, X_i, X_1X_{m-1}, X_1X_{n-1} \mid i \in [1,e-1]\bs\{m,n\}\}$ is a basis for $R'$ over $k$.  Hence, $\dim R' = e$.

We will check the generic case $m \neq 2, (m,n) \neq (2,3)$.  The rest is similar with straightforward calculations. 

Since $X_0\in (J,X_0)$, $X_1X_j \in (J,X_0)$ for $j\neq m-1,n-1$ as can be seen using $I_2(A^e(m,n))$.  Now $X_iX_j \in (J,X_0) , i+j\le m-1$, as, using binomials from $I_2(A^e(m,n))$ and $I_2(B^e(m,n))$, $$X_iX_j =(X_iX_j-X_{i-1}X_{j+1})+\ldots +(X_2X_{i+j-2}-X_1X_{i+j-1})+X_1X_{i+j-1}.$$  Since $X_2X_j \in (J,X_0), j\ge m-1, j\neq n-2$, using $I_2(B^e(m,n))$, and $X_i X_j = X_2X_{i+j-2} \text { mod } I_2(A^e(m,n))+I_2(B^e(m,n))$, we get $X_iX_j \in (J,X_0), i+j \neq m, n$. 
Finally, $X_iX_j = X_1X_{m-1} \text { mod }(J,X_0), i+j = m$ and $X_iX_j = X_1X_{n-1} \text { mod }(J,X_0), i+j = n$ as can be seen from $I_2(A^e(m,n))$.

Thus, the dimension of $R'$ over $k$ is at most $e$.  However, it is at least $e = \dim_k \left(\RM{m,n} / \IM{m,n},X_0 \right)$.  Hence, we have $\dim_k \RM{m,n}/(J,X_0) = \dim_k \RM{m,n}/(\IM{m,n},X_0)$ and the equality follows from Gastinger's criterion (\Cref{thm:J=I}). 

As a separate check, we show that these set of $e$ elements in the claim are indeed $k$-linearly independent. 

For any $f \in \{1, X_i, i\in[1,e-1]\bs \{m,n\}\}$, $$f \in (J,X_0) \Leftrightarrow f-X_0g \in J \text{ for some } g \in \RM{m,n},$$ which is not possible as $f-X_0g$ cannot be homogeneous. If $X_1X_{m-1} \in (J,X_0)$, then for some $g \in \RM{m,n}$, $$X_1X_{m-1} - X_0g \in J \Leftrightarrow X_0g \text{ has weight } 2e+m \Leftrightarrow g \text{ has weight } m,$$ a contradiction. Similarly, $X_1X_{n-1} \not\in (J,X_0)$. To show that the images of all these elements are linearly-independent over $k$ in $R'$, we will prove no $k$-linear combination of $$1, X_i, i\in[1,e-1]\bs \{m,n\}, X_1X_{m-1}, X_1X_{n-1}$$ can be in $(J,X_0)$. But this is true as for any such $k$-linear combination $f$, $f-X_0g$ is homogeneous for some $g \in \RM{m,n}$ $ \Leftrightarrow g \text{ has weight } i \in [1,e-1]\bs\{m,n\} \cup \{e+m,e+n\},$ which is impossible. 

This completes the proof.

\end{proof}

It is known that $\SM{m,n}$ is symmetric if and only if $(m,n)=(2,3)$ (\cite{DGS3}). We now want to study this in more generality, i.e.,
\begin{question}\label{ques:whensymmetric}
    When are Sally type semigroups symmetric?
\end{question}

\begin{notation}\label{not:B} Let $\Se{k}{j}$ denote the numerical semigroup of multiplicity $e$ and width $e-1$ which misses $k$ consecutive elements $j,j+1, \ldots, j+k-1$ from the set $[e,2e-1]$, i.e. $$\Se{k}{j} = \langle e,\ldots e+j-1,e+j+k \ldots 2e-1 \rangle.$$ 
\end{notation}
Thus, $\Se{k}{j}$ has embedding dimension $e-k$. When it is clear from context, we skip the superscript $e$.\\
For a numerical semigroup, let $G$ denote its set of gaps, $g$ the genus, i.e., $g=|G|$, and $F$ the Frobenius number, the largest gap. Thus,  a numerical semigroup is symmetric {\textit if and only if}
\begin{equation}\label{eq:2g=f+1}
     2g = F+1.
\end{equation}
We concentrate on embedding dimension $4$ and higher as the symmetry in embedding dimension $3$ or less is clear. Thus, from now on, assume $e-k \geq 4$.

The remainder of this paper is devoted to answering \Cref{ques:whensymmetric} for these Sally type semigroups, and constructing minimal free resolutions for the associated Gorenstein semigroup rings. We do this in two cases: $k \leq e/2$ and $k>e/2$.

\vspace{1cm}

\section{Symmetric Sally Type Semigroups When $k \leq e/2$.}

The following three results characterize when $\S{k}{j}$ is symmetric. 

\begin{lemma}\label{lem:smallgaps}
    $\displaystyle{G \supset [1,e-1] \cup [e+j,e+j+k-1], \q \text{and} \q  [2e,2e+2j-2] \subseteq \S{k}{j}.}$
\end{lemma}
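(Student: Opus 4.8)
The plan is to read off all three containments directly from the explicit generating set
$$\S{k}{j} = \la e, \ldots, e+j-1, e+j+k, \ldots, 2e-1 \ra,$$
using only the fact that the multiplicity is $e$: every nonzero element of $\S{k}{j}$ is at least $e$, and consequently every sum of two or more nonzero elements is at least $2e$. This single observation drives each of the three pieces.

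First, for $[1,e-1]\subseteq G$: since the smallest nonzero element of the semigroup is the multiplicity $e$, no integer strictly between $0$ and $e$ can lie in $\S{k}{j}$, so every element of $[1,e-1]$ is a gap; this is immediate. Second, for $[e+j,e+j+k-1]\subseteq G$, the key point is that an element of the range $[e,2e-1]$ belongs to $\S{k}{j}$ if and only if it is one of the minimal generators: any representation of such an element as a sum of two or more nonzero semigroup elements would force its value to be at least $2e > 2e-1$, a contradiction, so below $2e$ only single generators occur. As $e+j,\ldots,e+j+k-1$ are exactly the integers deleted from $[e,2e-1]$, none of them is a generator, hence each is a gap.

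Third, for $[2e,2e+2j-2]\subseteq \S{k}{j}$, I would realize each such integer as a sum of two of the ``low'' generators $e,e+1,\ldots,e+j-1$. Writing a target as $2e+s$ with $0\le s\le 2j-2$, it suffices to produce $0\le a,b\le j-1$ with $a+b=s$, since then $2e+s=(e+a)+(e+b)\in \S{k}{j}$. Such a decomposition exists for every $s\in[0,2j-2]$ (for instance $a=\min(s,j-1)$ and $b=s-a$ both lie in $[0,j-1]$), so the entire interval is covered.

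The argument is entirely elementary and there is no genuine obstacle; the only step requiring a moment of care is the second containment, where one must justify that below $2e$ the semigroup contains precisely the generators. This rests on the multiplicity-$e$ bound on two-term sums and is exactly what upgrades the deleted offsets from ``non-generators'' to honest gaps.
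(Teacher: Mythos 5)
Your proof is correct and takes essentially the same route as the paper: your decomposition of $2e+s$ via $a=\min(s,j-1)$, $b=s-a$ is exactly the paper's two-case split, writing elements of $[2e,2e+j-1]$ as $e+(e+x)$ with $0\le x\le j-1$ and elements of $[2e+j,2e+2j-2]$ as $(e+j-1)+(e+y+1)$ with $0\le y\le j-2$. The paper treats the two gap containments as immediate and proves only the third; you spell them out correctly via the multiplicity-$e$ bound (no nonzero element below $e$, and no element of $[e,2e-1]$ other than the generators, since any two-term sum is at least $2e$).
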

\begin{proof}
    The latter is true as any element in $[2e,2e+j-1]$ can be expressed as $(e)+ (e+x)$ for some $0 \leq x \leq j-1,$ and anything in $[2e+j,2e+2j-2]$ looks like $(e+j-1) + (e+y+1)$ for some $0 \leq y \leq j-2$, all of which lie in $\S{k}{j}$.
\end{proof}

\begin{theorem}\label{thm:sym<e/2}
    Suppose $k \le \frac{e}{2}$, $(j,k)\neq (1,\frac{e}{2})$.  Then $\S{k}{j}$ is symmetric if and only if $j=k$. For $(j,k)= \left(1, \frac{e}{2}\right)$, $\S{k}{j}$ is always symmetric. 
\end{theorem}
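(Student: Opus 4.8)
The plan is to verify the symmetry criterion \eqref{eq:2g=f+1}, namely $2g = F+1$, by computing the genus $g$ and the Frobenius number $F$ of $S = \S{k}{j}$ from its Apéry set $\mathrm{Ap}(S,e) = \{w_0,\dots,w_{e-1}\}$ with respect to the multiplicity $e$, where $w_i$ is the least element of $S$ congruent to $i \pmod e$. Recall that $g = \sum_{i=0}^{e-1}(w_i-i)/e$ and $F = \max_i w_i - e$, so that the Apéry set determines both quantities exactly (and certifies there are no further gaps). The generators occupy the residues $R = [0,j-1]\cup[j+k,e-1]$, for which $w_0 = 0$ and $w_i = e+i$; the substance of the argument is to locate, for each missing residue $i \in M := [j,j+k-1]$, the least integer $t\ge 2$ with $te+i \in S$, since then $w_i = te+i$.

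First I would record three regimes for $i \in M$ using \Cref{lem:smallgaps} and a direct analysis of sums of generators. If $i \le 2j-2$, then $i$ is a sum of two elements of $[0,j-1]$, so $w_i = 2e+i$ (level two). If $2j-1 \le i \le j+k-1$, I claim $w_i = 3e+i$ (level three): such an $i$ is either a sum of three elements of $[0,j-1]$ (when $i \le 3j-3$) or satisfies $e+i = r_1+r_2$ for two large residues $r_1,r_2 \in [j+k,e-1]$ (when $i \ge 2j+2k-e$), and since $k \le e/2$ forces $e \ge 2k-j+2$ whenever $j \ge 2$, these two ranges together cover all of $[2j-1,j+k-1]$. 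The boundary case $j=1$, where $R$ has no small residue besides $0$, I would treat separately: here $M=[1,k]$ and the two-large-residue representation $e+i = r_1+r_2$ is available for every $i \in [1,k]$ precisely when $e \ge 2k+1$, i.e. $k < e/2$, again yielding level three.

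With all Apéry values in hand, assembling $g$ and $F$ is routine bookkeeping. For $1\le j \le k$ (excluding the boundary pair) the counts give the uniform formulas $g = e+2k-j$ and $F = 2e+j+k-1$, whereas for $j > k$ every missing residue sits at level two, giving $g = e+k-1$ and $F = e+j+k-1$. Substituting into $2g = F+1$ collapses, in the first regime, to $3k = 3j$, hence $j=k$; in the second regime it forces the impossible relation $j = e+k-2$ (incompatible with $j+k \le e-1$). This proves $\S{k}{j}$ is symmetric $\Leftrightarrow j=k$ whenever $(j,k)\ne(1,\tfrac{e}{2})$.

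Finally, the exceptional pair $(j,k) = (1,\tfrac{e}{2})$, where $e = 2k$, is exactly the point where the level-three argument for $j=1$ fails at the single residue $i=1$: the representation $e+1 = r_1+r_2$ with $r_1,r_2\in[k+1,2k-1]$ is unavailable since $2k+1 < 2(k+1)$, and one checks $3e+1,4e+1 \notin S$ while $5e+1 = (e+k+1)+(e+k+1)+(e+2k-1) \in S$, so $w_1$ jumps to level five. Recomputing then gives $g = 2e+1$ and $F = 4e+1$, and $2g = F+1$ holds, so this semigroup is symmetric. I expect the Apéry-value analysis for the missing residues — and in particular pinning down the anomalous level-five jump that singles out $(1,\tfrac{e}{2})$ — to be the main obstacle; everything downstream is counting.
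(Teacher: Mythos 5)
Your proposal is correct, and it takes a genuinely different route from the paper. The paper works directly on the number line: it exhibits explicit intervals of gaps and of semigroup elements (\Cref{lem:smallgaps} and its extensions), and pins down the Frobenius number by arguing that once $e$ consecutive elements of $\S{k}{j}$ appear, everything beyond them lies in the semigroup; this forces several delicate sub-cases ($2j\ge k+2$ versus $2j=k+1$, $2k\le e+j-2$ versus the residual case $e=2k,\,j=1$), each handled by producing one extra element or one extra interval. You instead compute the Ap\'ery set $\mathrm{Ap}(\S{k}{j},e)$ residue by residue: missing residues $i\le 2j-2$ sit at level two, missing residues in $[2j-1,j+k-1]$ sit at level three (via three small generators when $i\le 3j-3$, or two large ones when $i\ge 2j+2k-e$, the two ranges overlapping exactly because $e\ge 2k-j+2$), and the pair $(j,k)=(1,e/2)$ is isolated as the unique situation where one residue jumps to level five. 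Your level assignments, coverage argument, and the resulting values ($g=e+2k-j$, $F=2e+j+k-1$ for $j\le k$; $g=e+k-1$, $F=e+j+k-1$ for $j>k$; $g=2e+1$, $F=4e+1$ in the exceptional case) all check out and coincide with the paper's. What your approach buys: the Ap\'ery set certifies both $g$ and $F$ simultaneously and exhaustively, so there is no need for the ``$e$ consecutive elements'' argument or for worrying about gaps hiding beyond the intervals exhibited, and the exceptional pair $(1,e/2)$ emerges structurally rather than as a leftover inequality case. What the paper's approach buys: its interval bookkeeping is the template that scales to the regime $k>e/2$ (Theorem 4.3 and Figure 2), where the analogue of your ``levels'' grows unboundedly with $n$ and a picture of alternating runs of gaps and non-gaps is the more economical organizing device.
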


\begin{proof}
    We want to utilize \Cref{eq:2g=f+1} and so we will compute the genus and Frobenius number of these semigroups.

\noindent 
\underline{Case I}. $j \geq k+1$. In this case, $2e+2j-2-(e+j+k-1) \geq e$ so that \Cref{lem:smallgaps} gives us $e$ consecutive elements in $\S{k}{j}$ after $e+j+k-1$. So $F=e+j+k-1$ and \Cref{lem:smallgaps} implies $2g=2(e+k-1)= e+k-1+e+k-1 >e+k+j=F+1$. Thus, $\S{k}{j}$ is not symmetric.

\noindent
\underline{Case II}. $j \leq k$. We will show that in this case, $F=2e+k+j-1$ and $g=e+2k-j$. First we claim $[2e+2j-1,2e+j+k-1] \subseteq G$. To see this, suppose that for some $-1 \leq u \leq k-j-1$, $2e+2j+u \in \S{k}{j}$. We may write $2e+2j+u = e+a + e+b \Rightarrow 2j+u = a+b$ for some $a \in \{0,...,j-1,j+k,...,e-1\}$ and some $b\geq 0$. If $a \in [0,j-1],$ then $a+b=2j+u \implies b >j+u \implies b \geq j+k$. Then necessarily $a<u-k+j <0$, a contradiction. If $a \in [j+k,e-1],$  then $2j+u \leq  j+k-1 < j+k \leq a+b$, also a contradiction. Thus, 
\begin{equation*}\label{eq:gapsjleqk}
    G \supseteq [1,e-1] \cup [e+j,e+j+k-1] \cup [2e+2j-1,2e+j+k-1].
\end{equation*}
Hence $g \geq e+k-1+k-j+1= e+2k-j$.  
Next, 
\begin{equation}\label{eq:nongapsjleqk}
    [e+j+k,2e+2j-2] \subseteq \S{k}{j} \implies [2e+j+k,3e+3j-3] \subseteq \S{k}{j}
\end{equation}

When $2j \geq k+2$, $3e+3j-3-(2e+j+k)+1 \geq e$ so that \Cref{eq:nongapsjleqk} gives us $e$ consecutive elements in $\S{k}{j}$. When $2j=k+1$, this only gives us $e-1$ consecutive elements in $\S{k}{j}$. However, $3e+3j-2 = 3e+j+k-1 = (e+j+k)+(2e-1) \in \S{k}{j}$. Thus, for $2j\ge k+1$, we have $F= 2e+j+k-1$ and $g = e+2k-j$.  

Further, $2e+2j+2k \le 3e+3j-2 $ whenever $2k\le e+j-2$.  In this case, we will have no gaps between $3e+3j-3$ and $2e+2j+2k$.  However, since $[e+j+k, 2e+2j-2] \subset \S{k}{j}$, we must have $[2e+2j+2k, 4e+4j-4]\subset \S{k}{j}$.  But then $4e+4j-4-(2e+j+k-1) \ge e$ and hence we have $F = 2e+j+k-1$ and $g= e+2k-j$ again.  Thus, the only remaining case is if $2k > e+j-2$ which can happen only if $e=2k$ and $j =1$.  

Now, if $e\neq 2k, j\neq 1$, we have $F$ and $g$ as above. Symmetry occurs only if $2e+j+k = 2e+2k-2j $ which implies $j=k$. 

Now for the remaining case $e=2k, j=1$:  Here, $4e+1 \notin \S{k}{j}$.  However, $4e+2 = 2(e+k+1) \in \S{k}{j}$.  Indeed, $[4e+2, 5e+5j-5]= [2e+2k+2j, 5e+5j-5]$ is in the semigroup.  But $5e+5j-4 = 5e+1 = 2e-1+2e+2k+2j$. Hence, the interval 
$[4e+2,5e+5j-4] $ which is of length $e$ is in the semigroup. So we get $F= 4e+1, \; g= e+2k-j+1+1=e+2k+1= 2e+1$.  This is symmetric if and only if $4e+2= 4e+2$. which is true.   

Thus, $k\le \frac{e}{2}$ implies $\S{k}{j}$ is symmetric if and only if $k=j$ unless $j=1, e=2k$, and $\S{e/2}{1}$ is always symmetric. 
\end{proof}

The proof of the above \Cref{thm:sym<e/2} also shows the following.
\begin{corollary}\label{cor:sym<e/2}
   Suppose $j\ge k+1$ or $2j \ge k+1$ or $2k\le e+j-2$.  Then  $\S{k}{j}$ is symmetric if and only if $k=j$.  
\end{corollary}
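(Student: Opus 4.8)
The plan is to read off the corollary from the case analysis already carried out in the proof of \Cref{thm:sym<e/2}. That proof establishes the equivalence ``$\S{k}{j}$ symmetric $\iff j=k$'' in every situation \emph{except} the single exceptional pair $e=2k$, $j=1$ (equivalently $(j,k)=(1,e/2)$), where the semigroup turns out to be symmetric despite $j\neq k$. Since the three hypotheses of the corollary are exactly engineered to exclude this pair, the strategy is: first dispose of the case $j\ge k+1$ directly, and then show that each of the remaining hypotheses forces us into the regime $j\le k$ in which the theorem's formulas $F=2e+j+k-1$ and $g=e+2k-j$ are valid.

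First I would treat the hypothesis $j\ge k+1$. This is precisely Case I of \Cref{thm:sym<e/2}, where $F=e+j+k-1$ and $2g=2(e+k-1)>F+1$, so $\S{k}{j}$ is not symmetric; since also $j>k$, the biconditional ``symmetric $\iff j=k$'' holds with both sides false. This disposes of the first disjunct.

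It remains to handle the case $j\le k$, which is forced once the first disjunct fails, so that one of $2j\ge k+1$ or $2k\le e+j-2$ must then hold. Here I would invoke Case II of \Cref{thm:sym<e/2}: the computation there gives $F=2e+j+k-1$ and $g=e+2k-j$, so by \Cref{eq:2g=f+1} symmetry is equivalent to $2e+4k-2j=2e+j+k$, i.e.\ to $j=k$ --- \emph{provided} we are not in the excluded subcase $e=2k$, $j=1$. The one genuine check is therefore that each surviving hypothesis excludes this subcase: if $2j\ge k+1$ held with $j=1$, $e=2k$ we would need $k\le 1$, impossible since $e-k\ge 4$ together with $e=2k$ force $k\ge 4$; and if $2k\le e+j-2$ held with $e=2k$, $j=1$ it would read $e\le e-1$, absurd. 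Thus under either surviving hypothesis the exceptional subcase cannot occur, the theorem's formulas apply, and symmetry is equivalent to $j=k$.

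The argument is essentially bookkeeping rather than new mathematics, so there is no serious obstacle; the only point requiring care is recognizing that the lone counterexample to ``symmetric $\iff j=k$'' in \Cref{thm:sym<e/2} is the pair $(j,k)=(1,e/2)$, and verifying --- via the two short inequalities above --- that each of the corollary's hypotheses rules it out.
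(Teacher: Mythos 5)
Your overall strategy --- reading the corollary off the case analysis inside the proof of \Cref{thm:sym<e/2} --- is the same as the paper's, and two of your steps are fine: the first disjunct is correctly settled by Case I, and your check that the hypotheses exclude the pair $(j,k)=(1,e/2)$ is correct. The gap is in the pivot of your argument: you justify the formulas $F=2e+j+k-1$, $g=e+2k-j$ by claiming that Case II yields them for \emph{every} $j\le k$ except the lone pair $(j,k)=(1,e/2)$. That claim is true only under the theorem's standing hypothesis $k\le e/2$, which the corollary deliberately drops: the corollary is stated for arbitrary $k$, and the paper invokes it precisely in the regime $k>e/2$ (the first proposition of the section on $k>e/2$ cites this corollary to rule out symmetry there). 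For $k>e/2$ your intermediate claim is simply false: take $e=20$, $k=12$, $j=2$, the paper's own example $S^{20}_{12}(2)$. There $j\le k$ and $(j,k)\neq(1,e/2)$, yet $F=87\neq 2e+j+k-1=53$ and $g=25\neq e+2k-j=42$. So an argument of the shape ``the theorem's conclusion holds away from $(1,e/2)$, and the hypotheses avoid $(1,e/2)$'' proves the corollary only for $k\le e/2$, i.e.\ only where it is already subsumed by the theorem.

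The missing observation --- which is the actual content of the paper's terse proof --- is that the computations in Cases I and II of \Cref{thm:sym<e/2} never use $k\le e/2$. The formulas $F=2e+j+k-1$ and $g=e+2k-j$ are derived there under exactly the conditions ``$j\le k$ and ($2j\ge k+1$ or $2k\le e+j-2$)''; the hypothesis $k\le e/2$ enters the theorem's proof only afterwards, to reduce the leftover case $2j\le k$, $2k>e+j-2$ to the single pair $(1,e/2)$. Since the corollary's hypotheses, once $j\ge k+1$ is disposed of by Case I (whose computation likewise needs no bound on $k$), place you exactly in those conditions, the formulas hold for every $k$, and symmetry is equivalent to $j=k$. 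Inserting this observation repairs your proof; without it, the case $k>e/2$ --- the only case in which the corollary says more than the theorem --- is not covered.
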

\begin{proof}
    The first part of the proof of \ref{thm:sym<e/2} shows the result if $j\ge k+1$ or $2j\ge k+1$ or $2k\le e+j-2$ as in these cases, $F= 2e+j+k-1$ and $g= e+2k-j$.  However, if $j=1$, then again $2k\le e+j-2$ unless $e=2k$ and $j=1$.  We have already shown that in the latter case,  the semigroup is not symmetric. 
\end{proof}

\begin{example}
\begin{enumerate}
    \item The Sally type semigroup $\S{8}{8}=\la 20,21,22,23,24,25,26,27,36,37,38,39 \ra$ is symmetric with Frobenius number $55=2e+k+j-1$ and genus $28=e+2k-j$. It satisfies $$e=20, \q k=8 \leq \frac{e}{2}=10, \q j=8=k.$$
     \item On the other hand, $\S{8}{7}=\la 20,21,22,23,24,25,26,35,36,37,38,39 \ra$ has Frobenius number $54$ and genus $29$ and is NOT symmetric. 
    \item $\S{10}{1}= \la 20,31,32,33,34,35,36,37,38,39\ra$ has Frobenius number $81=4e+1$ and genus $41=2e+1$. This is also symmetric and satisfies $$e=20, \q k= 10 = \frac{e}{2}, \q j=1.$$  
   
\end{enumerate}
    
\end{example}

\vspace{1cm}
    
\subsection{Minimal Free Resolutions and Betti Numbers}
We start by introducing some notation analogous to \Cref{not:A}.\\
Let $S_k:=\Se{k}{k}$ denote the symmetric semigroup minimally generated by $\langle e,\ldots e+k-1, e+2k, \ldots, 2e-1\rangle $. Let $R_k:=\Re{k}{k} = k[X_i, i \in [0,e-1] \bs [k,2k-1]]$ be the polynomial ring in $e-k$ variables. Denote the semigroup ring of $S_k$ by $$k[S_k]= k[t^{e+i}, i \in [0,e-1] \bs [k,2k-1]] \cong  \frac{R_k}{I_k},$$
where $I_k:= \Ie{k}{k}$ is its defining ideal.
Further, let us denote $X_{e+2i} = X_i^2; X_{e+2i+1} = X_iX_1$ for all $i\ge 0$.  
    \begin{theorem}\label{thm:kleqe/2ideal}
The ideal $I_k$ defining the semigroup ring $S_k$ is given by $I_2(A^e_k)+I_2(B^e_k)$, where 
$$A^e_k  = \begin{bmatrix}
        X_0 & X_1 &  \cdots  & X_{k-2} & X_{2k} & X_{2k+1} & \cdots  & X_{e-1} \\
        X_1  & X_2 &  \cdots & X_{k-1}  & X_{2k+1} & X_{2k+2} & \cdots  & X_0^2
        \end{bmatrix}$$
        and 
 $$   B^e_k =   \begin{bmatrix}
        {\bf X_{k-1}} &X_{2k} & X_{2k+1} & \cdots  & X_{e+k-4} &   X_{e+k-3} \\
       {\bf  X_{2k} }  & X_{3k+1} & X_{3k+2} & \cdots & X_{e+2(k-2)+1} & X_{e+2(k-1)}= X_{k-1}^2
        \end{bmatrix}$$  
    \end{theorem}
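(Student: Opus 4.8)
The plan is to mirror the proof of \Cref{thm:remove2determ}: set $J := I_2(A^e_k)+I_2(B^e_k)$, establish the containment $J \subseteq I_k$ cheaply, and then upgrade it to equality via Gastinger's criterion (\Cref{thm:J=I}) by computing $\dim_k R_k/(J,X_0)$. For the containment I would observe that in each column of $A^e_k$ the two entries have weighted degrees differing by exactly $1$, while in each column of $B^e_k$ they differ by exactly $k+1$; since these differences are constant across all columns of a given matrix, every $2\times 2$ minor is a homogeneous binomial whose two monomials share a weighted degree and equate two factorizations of the same power of $t$. Hence each minor lies in the defining ideal and $J \subseteq I_k$. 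Because the multiplicity is $a_1 = e$, \Cref{thm:J=I} reduces the theorem to the single assertion $\dim_k R_k/(J,X_0) = e$.

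For the dimension count the natural candidate basis is
\[
\{1\}\ \cup\ \{X_i : i \in [1,e-1]\bs[k,2k-1]\}\ \cup\ \{\mu_r : k \le r \le 2k-1\},
\]
where $\mu_r$ is a monomial whose weighted degree equals the Apéry element of $S_k$ in the residue class $r \bmod e$ — concretely $\mu_r = X_{r-k+1}X_{k-1}$ of degree $2e+r$ for $k \le r \le 2k-2$, and $\mu_{2k-1} = X_{2k}X_{e-1}$ of degree $3e+2k-1$ for the top gap residue. This set has exactly $1 + (e-1-k) + k = e$ elements, matching $\dim_k R_k/(I_k,X_0)$. The reduction engine is twofold: the minors of $A^e_k$ furnish the degree-preserving slides $X_aX_b \equiv X_{a-1}X_{b+1}$ within the low block $\{X_0,\dots,X_{k-1}\}$ and within the high block, together with the annihilations $X_1X_c \equiv X_0X_{c+1} \equiv 0 \pmod{(J,X_0)}$ whenever $X_c$ is a top-row entry of $A^e_k$; and the minors of $B^e_k$, whose first column $\binom{X_{k-1}}{X_{2k}}$ bridges the length-$k$ gap, carry slides across the gap and pin down the anomalous residue $2k-1$.

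I would then run the reduction as in \Cref{thm:remove2determ}: every monomial divisible by $X_0$ dies; a product $X_aX_b$ of two generators slides down through $A^e_k$ to a factor $X_0$ (hence to $0$) when its residue lies outside $[k,2k-1]$, and to the representative $\mu_r$ when the residue is $r \in [k,2k-2]$; the residue $2k-1$ survives only at weighted degree $3e+2k-1$, which is precisely where $B^e_k$ is needed, and higher-degree monomials are pushed down onto these representatives by iterating the slides. Linear independence of the images then follows from homogeneity exactly as in \Cref{thm:remove2determ}: if a $k$-linear combination of these monomials lay in $(J,X_0)$, then for some homogeneous $g$ the element (combination)$-X_0g$ would lie in the homogeneous ideal $J$, forcing $g$ to have weighted degree $w_r-e$ for one of the chosen Apéry degrees $w_r$; but $w_r - e \notin S_k$ by minimality of the Apéry element, so no such monomial $g$ exists. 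Thus $\dim_k R_k/(J,X_0)=e$ and $J = I_k$.

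The main obstacle is the spanning step, and specifically the bridging telescopes through $B^e_k$: unlike the $k=2$ case the gap now has width $k$ and the columns of $B^e_k$ are offset by $k+1$, so one must verify that the $B^e_k$-telescopes close up across all $k$ missing degrees, that the only survivors among the higher-degree monomials are the $\mu_r$, and in particular that the single residue class $2k-1$ contributes exactly one basis element at weighted degree $3e+2k-1$ rather than a spurious extra generator. Establishing termination and completeness of this reduction — that every monomial collapses onto $\{1\}\cup\{X_i\}\cup\{\mu_r\}$ — is the crux; once it is secured, Gastinger's criterion closes the argument at once.
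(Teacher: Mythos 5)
Your proposal follows exactly the skeleton of the paper's own proof: establish $J:=I_2(A^e_k)+I_2(B^e_k)\subseteq I_k$ from the constant weighted-degree difference down each column, then invoke Gastinger's criterion (\Cref{thm:J=I}) by showing $\dim_k R_k/(J,X_0)=e$, with the candidate basis given by Ap\'ery representatives. Indeed your basis is the paper's basis: your $\mu_r=X_{r-k+1}X_{k-1}$, $k\le r\le 2k-2$, are literally the paper's elements $X_iX_{k-1}$, $1\le i\le k-1$, and your $\mu_{2k-1}=X_{2k}X_{e-1}$ is congruent modulo $J$ to the paper's representative $X_1X_{k-1}^2$ (both have weighted degree $3e+2k-1$). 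The problem is that the proposal stops exactly where the paper's proof begins to do work: the spanning verification, which you yourself flag as ``the crux,'' is the entire mathematical content of the theorem, and it is left unproved. As written, this is an outline with a genuine gap, not a proof.

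Moreover, one step of your sketch fails as stated. You claim that a product of two generators ``slides down through $A^e_k$ to a factor $X_0$ when its residue lies outside $[k,2k-1]$.'' Consider $X_{2k}^2$ (say with $4k<e$, so its residue $4k$ is outside $[k,2k-1]$): no $2\times 2$ minor of $A^e_k$ involves $X_{2k}^2$ at all, since that would require a column headed by $X_{2k-1}$, which does not exist. The same obstruction applies to every product $X_iX_j$ with $i,j\ge 2k$ and $i+j<e+2k$: the $A^e_k$-slides only shuffle such monomials within the high block and can never reach $X_0$. Handling them is precisely the role of the key claim \eqref{eq:two} in the paper's proof: the first column of $B^e_k$ trades $X_{2k}X_{2k+i}$ for $X_{k-1}X_{3k+1+i}$, and one must then split into cases --- if $3k+1+i\le e-1$ this is a low-times-high product killed by $A^e_k$, while if $3k+1+i\ge e$ the symbol $X_{3k+1+i}$ is an \emph{extended} variable, i.e.\ a product of two low-indexed variables, and one lands on cubics such as $X_{k-1}X_lX_{l'}$. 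Those cubics need their own lemma (every cubic lies in $(J,X_0)$ or is congruent to $X_1X_{k-1}^2$), after which monomials of degree $\ge 4$ die because $X_1^2$ and $X_{k-1}^3$ lie in $(J,X_0)$. None of this bookkeeping appears in your proposal, so the upper bound $\dim_k R_k/(J,X_0)\le e$ is not established and Gastinger cannot yet be applied. (By contrast, the independence half you worry about is free: the surjection $R_k/(J,X_0)\twoheadrightarrow R_k/(I_k,X_0)$ already gives $\dim_k R_k/(J,X_0)\ge e$, so only spanning needs proof.)
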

    \begin{proof} 
The ideal $J_h = I_2\left(\begin{bmatrix} y_0 &y_1\ldots y_{h-1}&y_{h}\\
y_1&y_2 \ldots y_{h}& y_0^2\\
\end {bmatrix}\right)$ in the polynomial ring $k[y_0,y_1,...,y_h]$ is prime of height $h$ since it is the ideal of the monomial curve parametrized by the arithmetic sequence $a, a+1, \ldots, a+h$. If we renumber $y_1, \ldots, y_h$ as $X_1, \ldots, X_{k-1}, X_{2k}, \dots, X_{e-1}$, with $h= e-k-2$, then  $I_2(A^e_k)+(X_{k-1}-X_{2k}) = J_{e-k-2}+(y_{k-1}-y_{2k})$.  Thus, height of $I_2(A^e_k)$ is $e-k-2$.  

So, $J:=I_2(A^e_k)+I_2(B^e_k)$ is a binomial ideal of height $e-k-1$ in $R_k$. Similar to the proof of \Cref{thm:remove2determ}, we will show that the $k$-vector space $V= R_k/(J,X_0)$ has dimension $e$. 

Indeed, $1, X_i, i \in [1,e-1]\bs [k,2k-1]$, are linearly independent in $V$ and are $e-k$ in number.  Now, $X_iX_{k-1}, 1\le i\le k-1 $ are also linearly independent. We will show that these along with $X_1X_{k-1}^2$ will be a basis for this vector space. 
 
Set $T = (1,X_i, i\in[1,e-1]\bs[k,2k-1], X_iX_{k-1}, 1\le i\le k-1, X_1X_{k-1}^2)$, ideal in $R_k$, and let $J' = J+(X_0)$ be the $m$-primary ideal of height $e-k$, where $m$ is the maximal ideal $(X_i, i\in[0,e-1]\bs[k,2k-1])$ in $R_k$.

From $I_2(A^e_k)$, for $i,j>0$,
\begin{equation}\label{eq:one}
    X_iX_j \in J' \text{ for } i+j \le k-1, \q \text{ and } \q  X_iX_j \in J' \text{ for } i\le k-1, j\ge 2k.
\end{equation}
Also, $X_iX_j-X_{k-1}X_{i+j-k+1}\in I_2(A^e_k)$ for $k \leq i+j \leq 2k-4$. Thus, 
\begin{equation*}
    X_iX_j \in J' \text{ for all } 1 \leq i,j \leq k-2. 
\end{equation*}

 Next, we claim that from $I_2(B^e_k)$, one gets
 \begin{equation}\label{eq:two}
     X_{2k}X_j \in J'+T, \; j\ge 2k.
 \end{equation}

This is because, if $3k+1 \le e-1$, then $X_{k-1}X_{3k+1}\in J'$ by \eqref{eq:one}, and hence, $X_{2k}^2 \in J'$.  Similarly, we will have $X_{2k}X_j \in J',$ for all $2k \leq j \le e-2$.  If $3k+1\ge e$, then $0 \leq 3k+1-e \leq k+1$. For $j\geq 0$, $X_{3k+1+j} =X_l^2$ or $X_1X_l$ for some $l\geq 0$. This is in $J'$ for all $l=3k+1+j-e \leq k-1$ (so $2k \leq e-2$). Again, we get $X_{2k}X_j \in J', 2k\le j\le e-2$.  Finally, $X_{2k}X_{e-1}-X_{k-1}(X_{e+k}-X_1X_{k-1})-X_1X_{k-1}^2 \in J'$.  Thus, $X_{2k}X_{e-1} \in J'+ T$. 

We can now use $I_2(A^e_k)$ and \eqref{eq:two} to conclude 
\begin{equation*}
    X_{i}X_j \in J' \text{ for all } i,j \geq 2k.
\end{equation*}

Next, all cubes $X_iX_jX_t \in J'$ if \begin{enumerate} \item $i,j,t \not\in \{k-1\}$ or \item $i=k-1$ and $j$ or $t$ is $\geq k$. \end{enumerate} Say $t=k-1, i,j\leq k-1$. Then $X_{k-1}X_iX_j -X_1^2X_{k-1} = X_{k-1}(X_iX_j-X_1X_{k-1}) \in J'+T$. So all cubes are either in the ideal $J'$ or are equal to $X_1X_{k-1}^2$ modulo $J'$. 
Since $X_{k-1}^3$ and $X_1^2$ are in the ideal $J'$, all other monomials of degree bigger than $3$ are in $J'$ as well. 

So, we have $\{1, X_i, i\ge 1, X_iX_{k-1}, i\le k-1, X_1X_{k-1}^2\}$ as a $k$-basis for the vector space $V$ and it has dimension $e$ as required. The conclusion now follows from $\Cref{thm:J=I}$.

 \end{proof}

Let $F = \sum_{i}R_kf_i$ be a free $R_k$-module of rank $e-k-1$ with basis $\{f_0,f_1,...,f_{k-2},f_{2k},f_{2k+1},...,f_{e-1}\}$ and $G=R_kg_1+R_kg_2$ another free module. Denote by $\mathbf E(k)$ be the Eagon Northcott resolution of $R_k / I_2(A^e_k)$ beginning with $\wedge^2(F)\to R_k$. $\mathbf{E^\ast}(k)$ denotes its dual.
    \begin{theorem}\label{thm:kleqe/2res}
The minimal $R_k$-resolution of the semigroup ring $S_k$ is given by the mapping cylinder of $\psi: \mathbf{E^*}(k)\to \mathbf E(k)$ induced by $\psi_0: \mathcal S_{e-k-3}G \to R_k$, which
    is defined by the $2\times 2$ minors of $B^e_k$ involving the first column. 
    \end{theorem}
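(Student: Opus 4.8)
The plan is to build the resolution from the two complexes already visible in \Cref{thm:kleqe/2ideal}. Write $J := I_2(A^e_k)$ and $c := \operatorname{ht} I_k = e-k-1$, so that $J$ is a perfect ideal of height $c-1$. Because $J$ is the ideal of maximal minors of the $2\times(e-k-1)$ matrix $A^e_k$ and attains the generic height (as shown in the proof of \Cref{thm:kleqe/2ideal}), the Eagon--Northcott complex $\mathbf E(k)$ is its minimal $R_k$-free resolution, of length $c-1$, with all differential entries in $\mathfrak m$. Moreover $J$ is prime, so $R_k/J$ is a one-dimensional Cohen--Macaulay domain of codimension $c-1$; and since $S_k=\Se{k}{k}$ is symmetric by \Cref{thm:sym<e/2}, $R_k/I_k$ is Gorenstein of codimension $c$. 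I would exploit the short exact sequence $0\to I_k/J\to R_k/J\to R_k/I_k\to 0$ and produce the resolution of $R_k/I_k$ as a mapping cone.

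First I would identify what $\mathbf E^*(k)$ resolves. Dualizing $\mathbf E(k)$ into $R_k$ and placing $\mathbf E^*(k)$ so that $(\mathbf E(k)_{c-1})^{*}$ sits in homological degree $0$, its zeroth homology is $\operatorname{Ext}^{c-1}_{R_k}(R_k/J,R_k)$, which (up to a degree twist) is the canonical module $\omega_{R_k/J}$; since $R_k/J$ is a Cohen--Macaulay domain of rank $1$, this is a rank-one torsion-free module, and $\mathbf E^*(k)$ is a resolution of it concentrated in degree $0$ because $R_k/J$ is perfect of grade $c-1$. As $\wedge^{e-k-1}F\cong R_k$ and $\wedge^2 G\cong R_k$, the bottom term $(\mathbf E(k)_{c-1})^{*}$ is canonically $\mathcal S_{e-k-3}G$, so a degree-$0$ chain map $\mathbf E^*(k)\to\mathbf E(k)$ is determined by a map $\psi_0:\mathcal S_{e-k-3}G\to R_k$. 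I would take $\psi_0$ to record the $2\times 2$ minors of $B^e_k$ that use its bold first column, lift it to a full chain map $\psi$ via the comparison theorem (possible since $\mathbf E^*(k)$ maps to the resolution $\mathbf E(k)$), and form the associated mapping cone (the mapping cylinder of the statement), whose term in homological degree $i$ is $\mathbf E(k)_i\oplus \mathbf E^*(k)_{i-1}$ with differential assembled from $d_{\mathbf E(k)}$, $d_{\mathbf E^*(k)}$ and $\psi$. This total complex has length $c=e-k-1$, and since $\operatorname{rank}\mathbf E^*(k)_{j}=\operatorname{rank}\mathbf E(k)_{c-1-j}$ its ranks are symmetric, $\beta_i=\beta_{c-i}$, exactly as a Gorenstein resolution must be.

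It remains to see that the cone is acyclic with $H_0=R_k/I_k$. The mapping-cone long exact sequence reads $H_i(\mathbf E(k))\to H_i(\text{cone})\to H_{i-1}(\mathbf E^*(k))\xrightarrow{\bar\psi}H_{i-1}(\mathbf E(k))$, where $\bar\psi:\omega_{R_k/J}\to R_k/J$ is the map induced on zeroth homology. Since both complexes are resolutions concentrated in degree $0$, this collapses to $H_i(\text{cone})=0$ for $i\ge 2$, $H_1(\text{cone})=\ker\bar\psi$, and $H_0(\text{cone})=\operatorname{coker}\bar\psi$. Now $\bar\psi$ is a map of rank-one modules over the domain $R_k/J$, so once I check $\bar\psi\neq 0$ (by evaluating $\psi_0$ on a generator) it is automatically injective, giving $H_1=0$ and hence a resolution. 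Finally I would show $\operatorname{im}\bar\psi=I_k/J$ — equivalently that the first-column minors of $B^e_k$ generate $I_k$ modulo $J$ — which yields $H_0=(R_k/J)/(I_k/J)=R_k/I_k$; this is essentially what \Cref{thm:kleqe/2ideal} already extracts. Minimality follows because the Eagon--Northcott differentials are minimal and every entry of $\psi$ has positive degree (the minors of $B^e_k$ are at least quadratic), so no unit entry can cause cancellation.

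The \textbf{main obstacle} I anticipate is the explicit identification in the last step: matching the intrinsic connecting map $\bar\psi:\omega_{R_k/J}\to R_k/J$ with the concrete first-column minors of $B^e_k$ and proving its image is \emph{exactly} $I_k/J$, together with verifying that $\psi_0$ really lifts to a chain map and not merely to a map in degree $0$. If computing $\operatorname{im}\bar\psi$ directly proves unwieldy, a fallback is to establish acyclicity through the Buchsbaum--Eisenbud criterion, checking the rank and grade conditions on the ideals of minors of the assembled differential; the self-dual ranks computed above and the known codimension $c$ make this tractable, if calculation-heavy.
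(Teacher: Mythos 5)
Your strategy coincides with the paper's own proof: resolve $R_k/I_2(A^e_k)$ by the Eagon--Northcott complex $\mathbf E(k)$ (justified by the generic height $e-k-2$), let $\psi_0$ be given by the first-column minors of $B^e_k$, lift to a chain map $\mathbf E^*(k)\to\mathbf E(k)$, take the cone, and get minimality from degree considerations. The one computation the paper actually performs is precisely the step you flag as needing verification: that $\psi_0$ annihilates the image of $\delta^*_{e-k-2}$ modulo $I_2(A^e_k)$, which is what legitimizes the comparison-theorem lift; your long-exact-sequence analysis of the cone is the acyclicity justification that the paper asserts rather than writes out. One small supplement you will need for $\im \bar\psi = I_k/J$: beyond \Cref{thm:kleqe/2ideal} you must also observe that the minors of $B^e_k$ \emph{not} involving the first column already lie in $J$; this is immediate from the shifting relations $X_iX_{j+1}-X_{i+1}X_j\in I_2(A^e_k)$ in the extended indexing $X_{e+2t}=X_t^2$, $X_{e+2t-1}=X_{t-1}X_t$, which is exactly how the paper opens its proof.

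The genuine gap is in your injectivity argument for $\bar\psi$. You assert that $J=I_2(A^e_k)$ is prime, but neither the paper nor your proposal establishes this: the proof of \Cref{thm:kleqe/2ideal} only shows that $J$ attains the generic height, which yields Cohen--Macaulayness via Eagon--Northcott but not primality (ideals of maximal minors of generic height need not be prime, and the entry $X_0^2$ prevents any appeal to $1$-genericity of linear scroll matrices); also $R_k/J$ is two-dimensional, not one-dimensional as you wrote. The domain hypothesis can be removed as follows: since $\operatorname{ht} I_k = e-k-1 > \operatorname{ht} J$, no minimal prime $P$ of $J$ contains $I_k$, so $(\im\bar\psi)_P=(I_k/J)_P=(R_k/J)_P$; thus $\bar\psi_P$ is a surjection from $\omega_{(R_k/J)_P}$ onto $(R_k/J)_P$, two modules of equal finite length, hence injective. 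Since $\operatorname{Ass}(\ker\bar\psi)\subseteq\operatorname{Ass}(\omega_{R_k/J})=\operatorname{Min}(R_k/J)$, injectivity at all minimal primes forces $\ker\bar\psi=0$. (Your Buchsbaum--Eisenbud fallback would also close this hole, at the cost of rank and depth computations.) Finally, your minimality argument only controls $\psi_0$: degree reasons must be checked for the lifted maps as well, which is what the paper does by noting that $\psi_1$ has entries only in $X_{k-1},X_{2k}$ (degree one) so positivity of degrees propagates, and at the top of the cone minimality follows because $\operatorname{pd}(R_k/I_k)$ equals $e-k-1$ exactly, so the last rank-one free module cannot cancel against anything.
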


\begin{proof}
Recall that $X_{e+2t} = (X_t)^2$ and $X_{e+2t-1}= X_{t-1}X_t, t\le k-1$. Thus, $X_iX_{j+1}-X_{i+1}X_j \in I_2(A^e_k)$ for all $0\le i\le j\le e+2k-2$. The $2 \times (e-k-1)$ matrix $A^e_k$ is of height $e-k-2$. Hence, $\mathbf E(k)$ is determined by the multiplication map $A^e_k: F\to G^*$.

\begin{tikzcd}
    \mathbf E(k): 0 \ar[r] & D_{e-k-3}G^\ast \ar[r,"\delta_{e-k-4}"] & \overset{e-k-2}{\bigwedge} F \otimes D_{e-k-4}G^\ast \ar[r, "\delta_{e-k-3}"] & \cdots \ar[r, "\delta_2"] & \overset{2}{\bigwedge}F \ar[r, "\delta_1"] & R_k \\
    \mathbf E^\ast(k): 0 \ar[r] & R_k \ar[r, "\delta_1^\ast"] \ar[u, dashed] & \overset{2}{\bigwedge} F^\ast \ar[r, "\delta_2^\ast"] \ar[u, dashed] & \cdots \ar[r, "\delta^\ast_{e-k-3}"] & F \otimes \mathcal S_{e-k-4}G \ar[r, "\delta^\ast_{e-k-2}"] \ar[u, dashed, "\psi_1"] & \mathcal S_{e-k-3}G \ar[u, "\psi"]
\end{tikzcd}
 
    We will define a map of complexes $\psi_{\cdot}: \mathbf E^*(k) \to \mathbf E(k)$ extending the map $\psi: S_{e-k-3}G\to R$ such that the composition
    $ F\otimes \mathcal S_{e-k-4} G \to \mathcal S_{e-k-3}G  \to R_k \to R_k/I_2(A^e_k)$ is zero. $$\psi(g_1^ig_2^{e-k-3-i}) =(-1)^i[X_{k-1}X_{3k+1+i}-X_{2k}X_{2k+i}], \q 0\le i\le e-k-3$$

    This is well-defined because $3k+1+i \le 3k+1+e+k-3 =e+2(k-1)$.  

    Now, $$\delta_{e-k-2}^* ( f_j  \otimes (g_1^{i}g_2^{e-k-4-i}))= (-1)^j [X_j g_1^{i+1}g_2^{e-k-4-i} +X_{j+1} g_1^{i}g_2^{e-k-3-i}].$$
Hence, the composition
    \begin{align*}
        & \psi \circ \delta_{e-k-2}^* ( f_j \otimes (g_1^{i}g_2^{e-k-4-i})) \\
        = & (-1)^j[(-1)^{i+1}X_j(X_{k-1}X_{3k+2+i}-X_{2k}X_{2k+i+1})+ (-1)^iX_{j+1} (X_{k-1}X_{3k+1+i}-X_{2k}X_{2k+i})] \\
        = & (-1)^{j+i+1}[ X_{k-1} ( X_jX_{3k+2+i}-X_{j+1} X_{3k+1+i}) + X_{2k}( X_jX_{2k+i+1}- X_{j+1}X_{2k})] \in I_2(A^e_k).
    \end{align*}

Thus, the mapping cylinder of $\psi$ is the resolution of $\frac{R_k}{I_2(A^e_k)+I_2(B^e_k)}= R_k/I_k$. 

Minimality of the resolution follows from the degree consideration of the maps. All the maps $\delta_i,$ $i\geq 2$, are of standard degree $1$ except in $X_0$.  The first map $\delta_1$ is of standard degree $\ge 2$ and is of degree $3$ whenever $X_0$ appears.  The map $\psi_1$ involves only $X_{k-1}$ and $X_{2k}$, and is always of degree $1$.  Thus, the non zero entries in the  map $\psi_2$ are also of degree $1$ or more.   It follows that the remaining $\psi_i$ are also minimal, except possibly the last one. However, the ideal $I_k$ is Cohen-Macaulay of co-dimension $1$ so the last map $\psi_{e-k-2}: R_k\to D_{e-k-3}G^\ast$ has to be minimal. 

Thus, the mapping cylinder of $\psi$ is the minimal resolution of the semigroup ring $k[S_k]$.
\end{proof}

\begin{corollary}
        Betti numbers of $S(k)$ are 
        $$\beta_t (S^e(k)) = t\displaystyle{{e-k-1 \choose t+1}+(e-k-1-t){e-k-1 \choose t-1}, \q  t\le e-k-2},$$
        and $\beta_{e-k-1} = 1$.
\end{corollary}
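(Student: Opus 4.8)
The plan is to read the Betti numbers off directly from the minimal resolution constructed in \Cref{thm:kleqe/2res}, where the resolution of $k[S_k]$ is produced as the mapping cone of $\psi\colon \mathbf E^\ast(k)\to\mathbf E(k)$ and its minimality has already been established by the degree argument. Because the resolution is minimal, each $\beta_t$ is simply the rank of the free module in homological degree $t$, so the whole computation reduces to counting the ranks contributed by the two strands of the cone. Throughout I write $n:=e-k-1=\operatorname{rank}F$ and recall $\operatorname{rank}G=2$; the matrix $A^e_k$ is $2\times n$ of maximal height $n-1$, so $\mathbf E(k)$ is the Eagon--Northcott complex of length $n-1$.

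First I would record the ranks of the terms of $\mathbf E(k)$. Its module in homological degree $t$ is
$$E_t=\textstyle\bigwedge^{t+1}F\otimes D_{t-1}G^\ast\quad(1\le t\le n-1),\qquad E_0=R_k.$$
Since $G$ has rank $2$, the divided power $D_{t-1}G^\ast$ is free of rank $t$, while $\bigwedge^{t+1}F$ has rank $\binom{n}{t+1}$, so
$$\operatorname{rank}E_t=t\binom{n}{t+1}\quad(1\le t\le n-1),\qquad \operatorname{rank}E_0=1.$$
The dual complex $\mathbf E^\ast(k)$ has the same ranks in the corresponding spots.

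Next I would assemble the mapping cone. After reindexing $\mathbf E^\ast(k)$ so that $\psi$ becomes a degree-preserving chain map and the cone has length $n=\operatorname{ht}I_k$, the free module in homological degree $t$ of the cone is
$$\mathbf F_t=E_t\oplus E_{n-t}^\ast\quad(1\le t\le n-1),\qquad \mathbf F_0=R_k,\quad \mathbf F_n=E_0^\ast=R_k.$$
In particular $\beta_0=\beta_n=\beta_{e-k-1}=1$, and for $1\le t\le n-1$ minimality gives
$$\beta_t=\operatorname{rank}E_t+\operatorname{rank}E_{n-t}=t\binom{n}{t+1}+(n-t)\binom{n}{n-t+1}=t\binom{n}{t+1}+(n-t)\binom{n}{t-1},$$
which is exactly the claimed formula once $n$ is replaced by $e-k-1$.

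The only real care needed is the index alignment of the two strands of the cone, i.e.\ confirming that the summand of $\mathbf E^\ast(k)$ sitting in homological degree $t$ is $E_{n-t}^\ast$ and not a neighbouring term. The endpoints ($\mathbf F_0=R_k$ surjecting onto $k[S_k]$ and $\mathbf F_n=R_k$) pin down the indexing, the generator count $\beta_1=\binom{n}{2}+(n-1)$ matches the $\binom{n}{2}$ minors of $A^e_k$ together with the $n-1$ minors of $B^e_k$ through the first column, and the Gorenstein symmetry $\beta_t=\beta_{n-t}$ provides a clean internal consistency check on the final expression.
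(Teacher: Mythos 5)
Your proposal is correct and is essentially the paper's own (implicit) argument: the corollary follows immediately from \Cref{thm:kleqe/2res} by reading off the ranks of the mapping-cone terms $E_t\oplus E_{e-k-1-t}^\ast$ of the Eagon--Northcott complex and its dual, exactly as you do. Your rank count ($\operatorname{rank}\bigwedge^{t+1}F\otimes D_{t-1}G^\ast = t\binom{e-k-1}{t+1}$), the index alignment of the two strands, and the consistency checks via $\beta_1$ and the Gorenstein symmetry $\beta_t=\beta_{e-k-1-t}$ are all accurate.
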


\vspace{1cm}

\section{Symmetric Sally Type Semigroups When $k> e/2$.}

In general, when $k>e/2$, the situation becomes much more chaotic. For instance, when $k$ is $e-1$ or $e-2$, $\S{k}{j}$ has embedding dimension $1$ or $2$, and so is trivially symmetric. When $k= e-3$, there are two possibilities for $\S{k}{j}$ to be symmetric: When $j=1$, it is symmetric if and only if $e$ is even. If $j=2$, it is symmetric if and only if $e\equiv 2$ mod $3$.  For $S_2(e-3) = \la e,e+1,2e-1 \ra$ is symmetric only if $(e+1, 2e-1) \neq 1$.  Hence $e = 2+3t$ for some $t \in \N$. Then $\la 3t+2, 3(t+1), 3(2t+1) \ra$ is symmetric.  Thus, we have symmetry if and only if $e\equiv 2$ mod $3$. Furthermore:

\begin{proposition}
    Assume $k>e/2$. If $e+j-2\geq 2k$ or $2j \geq k+1$, then $\S{k}{j}$ is never symmetric. 
\end{proposition}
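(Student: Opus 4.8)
The plan is to reduce this proposition to the Frobenius-number and genus computations already carried out in Case II of the proof of \Cref{thm:sym<e/2}. The crucial observation is that that computation never actually used the standing hypothesis $k\le e/2$: the case split there was on whether $j\le k$ or $j\ge k+1$, and the side conditions under which the formulas $F=2e+j+k-1$ and $g=e+2k-j$ were derived are exactly $j\le k$ together with one of $2j\ge k+1$ or $2k\le e+j-2$. So the first step is to record that in the regime $k>e/2$ the constraint needed for the generating set in \Cref{not:B} to have its stated form, namely $e+j+k\le 2e-1$, i.e. $j\le e-1-k$, forces
\[
j\le e-1-k< e-1-\tfrac{e}{2}=\tfrac{e}{2}-1<k .
\]
Hence $j<k$ holds automatically here, so $j\le k$ is guaranteed, and the two hypotheses $2j\ge k+1$ and $e+j-2\ge 2k$ (equivalently $2k\le e+j-2$) are precisely the side conditions required by the Case II analysis.

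Granting this, I would invoke the established values $F=2e+j+k-1$ and $g=e+2k-j$ under either hypothesis and then apply the symmetry criterion \Cref{eq:2g=f+1}. Symmetry would require
\[
2(e+2k-j)=(2e+j+k-1)+1,
\]
which simplifies to $3k=3j$, i.e. $j=k$. Since we have just shown $j<k$ whenever $k>e/2$, this equality is impossible, so $\S{k}{j}$ cannot be symmetric. As the two stated hypotheses are the only ones invoked, this establishes that $\S{k}{j}$ is \emph{never} symmetric in these cases.

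The one point that genuinely needs checking—and the main (essentially the only) obstacle—is confirming that the derivations of $F$ and $g$ in \Cref{thm:sym<e/2} go through verbatim in the regime $k>e/2$. Concretely, I would re-verify that the gap interval $[2e+2j-1,\,2e+j+k-1]\subseteq G$, the non-gap chain $[e+j+k,\,2e+2j-2]\subseteq\S{k}{j}$, and its Minkowski square $[2e+2j+2k,\,4e+4j-4]\subseteq\S{k}{j}$ all rest only on $j<k$ and on the relevant side condition, with no hidden use of an upper bound on $k$ relative to $e/2$. This verification is routine: each containment was argued purely by writing an element of $[2e,3e-1]$ or $[2e,4e]$ as a sum of generators drawn from $[e,2e-1]$, an argument insensitive to whether $k$ exceeds $e/2$. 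Once this is confirmed, the proposition follows immediately from the arithmetic above.
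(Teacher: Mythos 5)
Your proof is correct and follows essentially the same route as the paper: the paper's proof simply cites \Cref{cor:sym<e/2} (whose own justification is exactly your observation that Case II of \Cref{thm:sym<e/2} yields $F=2e+j+k-1$ and $g=e+2k-j$ without using $k\le e/2$), and then, as you do, combines $j+k\le e-1$ with $k>e/2$ to force $j<k$, ruling out the required equality $j=k$.
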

\begin{proof}
    We have proved in Corollary \ref{cor:sym<e/2} that for $e+j-2\ge 2k$ or $2j\ge k+1$, symmetry does not occur unless $j=k$. Recall that the definition of Sally type semigroups implies $j+k \leq e-1$. However, since $k>e/2$, this implies $j \leq k-1$. So $\S{k}{j}$ is never symmetric.
\end{proof}

To make the problem more tractable in the remaining cases, i.e., when $\frac{e}{2} < k < e-3$, we first provide explicit formulae for the Frobenius number and genus of $\S{k}{j}$, each of which depends on all three parameters $e,k$ and $j$, and then characterize when $\S{k}{j}$ is symmetric. We divide this further into two cases.

\vspace{1cm}

\subsection{Symmetric Sally Type Semigroups when $k>e/2$ and $j \neq 1$.}

In this subsection, assume that 
\begin{equation}\label{eq:2k>e}
    \frac{e}{2} < k < e-3 \q \text{ and }  \q 2 \leq j \leq \frac{k}{2} \q \text{ and } \q e+j-2<2k
\end{equation}

\begin{theorem}\label{thm:table}
    The following table tells what the Frobenius number $F_i$ and genus $G_i$ can be for a given triple $e,k,j$ satisfying \eqref{eq:2k>e}.

\begin{figure}[h]
    \centering
    $$\begin{array}{|c|c|l|}
\hline
    i & \text{ Constraints } & \multicolumn{1}{c|}{F_i \text{ and } G_i} \\
    \hline
    \multirow{2}{*}{$3$} & \multirow{2}{*}{$e+j-1 \leq 2k < e+2j-2$} &  F_3=2e+2j+2k-1\\ 
    & & G_3=4k-2j+2 \\
    \hline 
     \multirow{2}{*}{$4$} & \multirow{2}{*}{$e+2j-2 \le 2k; 3k\le 2e+2j-4 $} & F_4=3e+2j+2k-1\\
    & & G_4=6k-4j-e+5 \\
    \hline 
    \vdots & \vdots & \multicolumn{1}{c|}{\vdots} \\
    \hline
    \multirow{2}{*}{$2n-1$} & (n-1)(e-2)+(n-1)j <nk  & F_{2n-1}=ne+nj+nk-1\\
    & nk \le (n-1)e + nj - (2n-1) & G_{2n-1} = n^2k-n(n-1)j -(n-1)^2e +(n-1)(2n-1)+e-1 \\
    \hline 
    \multirow{2}{*}{$2n$} & (n-1)e+nj-(2n-1)<nk;& F_{2n}=(n+1)e+nj+nk-1\\
    &  (n+1)k\le n e+nj-2n  & G_{2n} = n(n+1)k -n^2j -n(n-1)e+n(2n-1)+e-1 \\
    \hline 
\end{array}$$
    \caption{Frobenius Number and Genus for $\S{k}{j}$ given \eqref{eq:2k>e}.}
    \label{fig:table}
\end{figure}

\end{theorem}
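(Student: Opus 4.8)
The plan is to first reduce everything to a clean interval decomposition. Since $\S{k}{j}$ is generated by the two blocks of consecutive integers $e,e+1,\dots,e+j-1$ and $e+j+k,\dots,2e-1$, writing $S_0=\langle e,\dots,e+j-1\rangle$ and $S_L=\langle e+j+k,\dots,2e-1\rangle$ gives $\S{k}{j}=S_0+S_L$. A numerical semigroup generated by consecutive integers $a,\dots,b$ equals $\{0\}\cup\bigcup_{q\ge1}[qa,qb]$, so the Minkowski sum yields
\[
\S{k}{j}\setminus\{0\}=\bigcup_{\substack{m,p\ge0\\(m,p)\neq(0,0)}}J_{m,p},\qquad J_{m,p}:=\bigl[\,me+p(e+j+k),\ m(e+j-1)+p(2e-1)\,\bigr].
\]
I would establish this as the central lemma. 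Each $J_{m,p}$ is a genuine interval, of length $m(j-1)+p(e-1-j-k)$, and since there is at least one large generator we have $e-1-j-k\ge0$ and $j-1\ge1$, so this length is non-decreasing in both $m$ and $p$. The left endpoint $\ell(m,p):=me+p(e+j+k)$ is $m$ copies of the smallest small generator plus $p$ copies of the smallest large generator; note that $\ell(0,n)=n(e+j+k)$ and $\ell(1,n)=(n+1)e+n(j+k)$ are exactly the values $F_{2n-1}+1$ and $F_{2n}+1$ from the table.

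Writing $i=2n-1$ or $i=2n$, the Frobenius computation then comes down to two claims: that $\ell(0,n)-1$ (odd case) resp. $\ell(1,n)-1$ (even case) is a gap, and that everything beyond it lies in $\S{k}{j}$. For the latter it suffices to produce $e$ consecutive semigroup elements starting at that value, after which closure under adding $e$ finishes. The key observation is that because $e<e+j+k<2e$, the intervals that assemble such a run lie along the anti-diagonal $\{(m,p):m+2p=2n\}$ (odd case) resp. $\{m+2p=2n+1\}$ (even case): along it the left endpoints $\ell(2(n-p),p)=2ne-p(e-j-k)$ decrease from $2ne$ down to the critical value $\ell(0,n)=n(e+j+k)$ in steps of $e-j-k$, and the row-$i$ inequalities are precisely the conditions that consecutive intervals on this anti-diagonal overlap, producing an unbroken run of length $e$. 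The opposite (lower) threshold of the row guarantees that the gap between this anti-diagonal and the previous one is still nonempty, so that the critical value minus one is genuinely omitted; crossing that threshold closes the gap and is exactly what shifts $F_{2n-1}$ up by $e$ to $F_{2n}$.

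For the genus I would count the omitted integers. All residues $r\in[0,j-1]\cup[j+k,e-1]$ are harmless — their smallest representative is the generator $e+r$ — so the entire contribution comes from the $k$ \emph{missing} residues $r\in[j,j+k-1]$, whose Apéry values $w_r$ must be read off from the decomposition as $w_r=r+e\,N(r)$, where $N(r)$ minimizes $m+p+q$ over $p(j+k)+t=r+qe$ with $0\le t\le m(j-1)+p(e-1-j-k)$. Equivalently, one sums the lengths of the gaps between successive anti-diagonals below the frontier. Either way the row-$i$ constraints single out one optimal regime for $(m,p,q)$ uniformly in $r$ — the even/odd split recording whether the optimum uses $m=0$ or $m=1$ alongside $p=n$ large generators — and substituting into $F=\max_r w_r-e$ and $g=\tfrac1e\sum_r w_r-\tfrac{e-1}{2}$ yields the closed forms $F_i,G_i$.

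The genus, not the Frobenius number, is the main obstacle: whereas $F$ depends only on the single critical interval, $g$ requires controlling every gap on every anti-diagonal below the frontier simultaneously, i.e. solving the integer program for $N(r)$ for all $r\in[j,j+k-1]$ at once and checking that the resulting sum collapses to the stated quadratic-in-$n$ expression. The delicate point throughout is bookkeeping the overlaps among the doubly-indexed intervals $J_{m,p}$ and deciding exactly which gaps survive in each regime, which is where both \eqref{eq:2k>e} and the individual row inequalities must be invoked. A final, routine check is that the rows of \Cref{fig:table} partition the region \eqref{eq:2k>e}: consecutive rows compare $nk$ and $(n+1)k$ to complementary thresholds, so exactly one pair $(F_i,G_i)$ applies to each admissible triple $(e,j,k)$.
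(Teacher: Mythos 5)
Your framework is in fact the same as the paper's: the decomposition into the intervals $J_{m,p}$, merged along anti-diagonals $m+2p=c$, is exactly the interval/gap picture of the paper's Figure~\ref{fig:numberline}, whose blocks are $A_n=[n(e+j+k),\,2n(e+j-1)]$ and $B_n=[(n+1)e+n(j+k),\,(2n+1)(e+j-1)]$ and whose $g_i$ are your inter-anti-diagonal gaps; ``$F$ equals the last gap, $g$ equals the sum of gap lengths'' is also the paper's plan. The genuine problem is your reading of the row inequalities. Consecutive intervals on a fixed anti-diagonal \emph{always} overlap: the condition that $J_{2n-2p,p}$ meets $J_{2n-2p-2,p+1}$ reduces to $p(e-1-j-k)+2(n-p-1)(j-1)\ge 0$, which holds automatically since $j\ge 2$ and $j+k\le e-1$. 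So no row inequality can be ``precisely'' an intra-anti-diagonal overlap condition. What the inequalities actually encode is inter-block information: the lower one says the gap $g_i$ \emph{before} the block is nonempty, the upper one says the gap $g_{i+1}$ \emph{after} it is empty. In the odd rows your ``run of length $e$ along one anti-diagonal'' reading survives by a coincidence ($B_n$ starts exactly $e$ after $A_n$, so $g_{2n}\le 0$ is equivalent to $A_n$ having length $\ge e$), but in the even rows it is simply false. Counterexample: $e=20$, $j=2$, $k=12$ satisfies \eqref{eq:2k>e} and the row $i=4$ ($n=2$) constraints; the anti-diagonal $m+2p=5$ merges to $[88,105]$, of length $18<e$, yet $F_4=87$. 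The $e$ consecutive elements beginning at $88$ require the \emph{next} anti-diagonal $[102,126]$ as well: the upper inequality $(n+1)k\le ne+nj-2n$ only closes the gap $g_{2n+1}$, and one must then rule out gaps reopening farther out. That needs the monotonicity $g_{i+1}<g_i$ (consecutive differences are $-(j-1)$ and $k-e+1$, both negative since $j\ge 2$ and $j+k\le e-1$), or equivalently a check that $B_n\cup A_{n+1}$ has length $\ge e$ under the upper inequality. Your proposal establishes neither, so as written the even-row Frobenius argument fails, and your final ``exactly one row applies'' check does not by itself yield $F=F_i$.

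On the genus your emphasis is inverted. Once the gap lengths are written down, namely $g_{2m-1}=mk-(m-1)(e+j-2)$ and $g_{2m}=m(k-j)-(m-1)(e-2)+1$ (the paper's displayed $g_{2n-1}$ has a sign typo; its own computation of $g_3=2k-e-j+2$ agrees with the formula here), the genus is simply $G_i=\sum_{t\le i}g_t$, which telescopes to the entries of \Cref{fig:table}. This one-line summation is exactly what the paper does, and it is the route you mention only in passing as ``equivalently''; the Ap\'ery-set integer program for $N(r)$ that you present as the main body of work is an unnecessary detour, and you do not carry it out. In short: right decomposition and right overall plan, matching the paper's, but the stated mechanism for the Frobenius number is wrong in half the cases, the gap-monotonicity (or merged-block-length) step needed to finish both parities is missing, and the genus computation is sketched rather than done.
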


\begin{figure}
    \centering
\begin{tikzpicture}
    \draw (0.75,8.25) node {{\small $g_0$}};
    \draw (0,8) -- (14.5,8);
    \draw (0,7.9) -- (0,8.1);
    \draw (0,7.6) node {$0$};
    \draw (1.5,7.5) -- (1.5,8.5) node[anchor=south] {$e$};
    \draw (1.5,7.5) -- (1.6,7.5);
    \draw (1.5,8.5) -- (1.6,8.5);
    \draw (2.5,7.25) node {{\small $j$}};
    \draw (3.6,7.5) -- (3.6,8.5) node[anchor=south] {$e+j-1$};
    \draw (3.5,7.5) -- (3.6,7.5);
    \draw (3.5,8.5) -- (3.6,8.5);
    \draw (4.5,8.25) node {{\small $g_1$}};
    \draw (5.5,7.5) -- (5.5,8.5) node[anchor=south] {$e+j+k$};
    \draw (5.5,7.5) -- (5.6,7.5);
    \draw (5.5,8.5) -- (5.6,8.5);
    \draw (6.75,7.25) node {{\small $e+j-k-1$}};
    \draw (8.1,7.5) -- (8.1,8.5) node[anchor=south] {$2e+2j-2$};
    \draw (8,7.5) -- (8.1,7.5);
    \draw (8,8.5) -- (8.1,8.5);
    \draw (9.25,8.25) node {{\small $g_2$}};
    \draw (10.5,7.5) -- (10.5,8.5) node[anchor=south] {$2e+j+k$};
    \draw (10.5,7.5) -- (10.6,7.5);
    \draw (10.5,8.5) -- (10.6,8.5);
    \draw (12,7.25) node {{\small $e+2j-k-2$}};
    \draw (13.6,7.5) -- (13.6,8.5) node[anchor=south] {$3e+3j-3$};
    \draw (13.5,7.5) -- (13.6,7.5);
    \draw (13.5,8.5) -- (13.6,8.5);
    \draw (14.25,8.25) node {{\small $g_3$}};
     \draw (1.75,5.25) node {{\small $g_3$}};
    \draw (1,5) -- (15.5,5);
    \draw (3.5,4.5) -- (3.5,5.5) node[anchor=south] {$2e+2j+2k$};
    \draw (3.5,4.5) -- (3.6,4.5);
    \draw (3.5,5.5) -- (3.6,5.5);
    \draw (5,4.25) node {{\small $2e+2j-2k-3$}};
    \draw (6.6,4.5) -- (6.6,5.5) node[anchor=south] {$4e+4j-4$};
    \draw (6.5,4.5) -- (6.6,4.5);
    \draw (6.5,5.5) -- (6.6,5.5);
    \draw (8,5.25) node {{\small $g_4$}};
    \draw (9.5,4.5) -- (9.5,5.5) node[anchor=south] {$3e+2j+2k$};
    \draw (9.5,4.5) -- (9.6,4.5);
    \draw (9.5,5.5) -- (9.6,5.5);
    \draw (11,4.25) node {{\small $2e+3j-2k-4$}};
    \draw (12.6,4.5) -- (12.6,5.5) node[anchor=south] {$5e+5j-5$};
    \draw (12.5,4.5) -- (12.6,4.5);
    \draw (12.5,5.5) -- (12.6,5.5);
    \draw (14,5.25) node {{\small $g_5$}};
   \draw (1.75,2.25) node {{\small $g_5$}};
    \draw (0.5,2) -- (14.25,2);
    \draw[dashed] (14.25,2) -- (15.25,2);
    \draw (3,1.5) -- (3,2.5) node[anchor=south] {$3e+3j+3k$};
    \draw (3,1.5) -- (3.1,1.5);
    \draw (3,2.5) -- (3.1,2.5);
    \draw (4.5,1.25) node {{\small $3e+3j-3k-5$}};
    \draw (6.1,1.5) -- (6.1,2.5) node[anchor=south] {$6e+6j-6$};
    \draw (6,1.5) -- (6.1,1.5);
    \draw (6,2.5) -- (6.1,2.5);
    \draw (7.5,2.25) node {{\small $g_6$}};
    \draw (9,1.5) -- (9,2.5) node[anchor=south] {$4e+3j+3k$};
    \draw (9,1.5) -- (9.1,1.5);
    \draw (9,2.5) -- (9.1,2.5);
    \draw (10.5,1.25) node {{\small $3e+4j-3k-6$}};
    \draw (12.1,1.5) -- (12.1,2.5) node[anchor=south] {$7e+7j-7$};
    \draw (12,1.5) -- (12.1,1.5);
    \draw (12,2.5) -- (12.1,2.5);
    \draw (13.5,2.25) node {{\small $g_7$}};
    \draw (1,-0.75) node {{\small $g_{2n-1}$}};
    \draw[dashed] (0,-1) -- (0.5,-1);
    \draw (0.5,-1) -- (15,-1);
    \draw[dashed] (15,-1) -- (15.5,-1);
    \draw (2,-1.5) -- (2,-0.5) node[anchor=south] {$ne+nj+nk$};
    \draw (2,-1.5) -- (2.1,-1.5);
    \draw (2,-0.5) -- (2.1,-0.5);
    \draw (3.5,-1.75) node {{\small $ne+nj-nk-(2n-1)$}};
    \draw (5.1,-1.5) -- (5.1,-0.5) node[anchor=south] {$2ne+2nj-2n$};
    \draw (5,-1.5) -- (5.1,-1.5);
    \draw (5,-0.5) -- (5.1,-0.5);
    \draw (6.7,-0.75) node {{\small $g_{2n}$}};
    \draw (8.5,-1.5) -- (8.5,-0.5) node[anchor=south] {$(n+1)e+nj+nk$};
    \draw (8.5,-1.5) -- (8.6,-1.5);
    \draw (8.5,-0.5) -- (8.6,-0.5);
    \draw (10.75,-1.75) node {{\small $ne+(n+1)j-nk-2n$}};
    \draw (13.1,-1.5) -- (13.1,-0.5) node[anchor=south] {$(2n+1)e+(2n+1)j-(2n+1)$};
    \draw (13,-1.5) -- (13.1,-1.5);
    \draw (13,-0.5) -- (13.1,-0.5);
    \draw (14.25,-0.75) node {{\small $g_{2n+1}$}};
\end{tikzpicture}
    \caption{Gaps and Nongaps in $\S{k}{j}$}
    \label{fig:numberline}
\end{figure}

\begin{proof}
     Consider the number line plotted with all the integer points. \Cref{fig:numberline} illustrates the intervals contained in $\S{k}{j}$ and the possible gaps $g_i$.  For instance, $g_0$ is the cardinality of the set $[1,e-1]$, which are all gaps of $\S{k}{j}$. $g_1$ is the cardinality of $[e+j,e+j+k-1]$. The intervals $[e,e+j-1]$ and $[e+j+k,2e+2j-2]$, with cardinality $j$ and $e+j-k-1$ respectively, are always contained in $\S{k}{j}$.  Further, $g_2=2e+j+k-1-2e-2j+2 = k-j+1 >0$ and $g_3 = 2e+2j+2k-1-3e-3j+3=2k-e-j+2>0$. 
     
Then we claim that $G_i = \sum_{t=0}^i = g_i$ and $F_i$ are as above.  
From \Cref{fig:numberline}, we see that for $n\ge 1$, 
$$g_{2n-1} = nk-(n-1)e-(n-1)j -2(n-1); \; \; g_{2n} = nk-(n-1)e-nj+(2n-1).$$

So $F= F_1, \; G= G_1= g_0+g_1 = e-1+k$ if and only if $k \le j-1$. Next, $F = F_2,\; G= G_2= G_1+g_2 = e+2k-j$, if and only if $j-1 <k ;\;  2k\le e+2j-2$. Thus, we have the result for $n=1$.  

We now proceed by induction on $n$. Assume we have the result for all $k\le n-1$. Necessarily $F> F_{2n-2}, G\ge G_{2n-2}$. This implies $g_{2n-1}>0$, i.e., $nk > (n-1)e+(n-1)j -2(n-1)$.  Now, $F = F_{2n-1}$ if and only if  \begin{itemize}
         \item[(i)] $(n-1)e+nj-nk-(2n-1) \geq 0$, or 
         \item[(ii)] $g_{2n}=0 \implies nk \le (n-1)e+nj-(2n-1)$.
     \end{itemize}
     Thus, $F= F_{2n-1}$ and $G= G_{2n-2}+g_{2n-1}= G_{2n-1}$ if $ (n-1)e+(n-1)j -2(n-1) <nk \le  (n-1)e+nj -(2n-1)$.  Now let $nk >  (n-1)e+nj -2(n-1)$, so that $F \ge F_{2n-1}$ and $g_{2n} \neq 0$.  So, $F= F_{2n}$ if and only if \begin{itemize}
         \item[(i)] $(n-1)e+(n+1)j-nk-2n \geq 0$, or 
         \item[(ii)] $g_{2n+1}=0 \implies (n+1)k \le ne+nj-2n$.
     \end{itemize}

The inequality in (i) implies the one in (ii), when we factor in $j+k\le e-1$.   So, we only need the second. 
     Thus, $F= F_{2n}$ and $G= G_{2n-1}+g_{2n}=G_{2n}$ if and only if $(n-1)e+nj-(2n-1)<nk; (n+1)k\le n e+nj-2n$. This finishes the proof. 
\end{proof}

\begin{example}\label{eg:2k=e+4j-4}
    Take $e=20, k=12, j=2$. Then $S^{20}_{12}(2)= \la 20,21,34,35,36,37,38,39 \ra$ satisfies
    \begin{itemize}
        \item[] $10=\frac{e}{2}< k \leq e-4=16$,
        \item[] $2 \leq j \leq \frac{k}{2} = 6$,
        \item[] $e+3j-3=23 \leq 2k=24 < \frac{3e}{2}+2j-2 = 34$.
    \end{itemize}
     By GAP (\cite{NumericalSgps}), its Frobenius number and genus are $F_4=87 \text{ and } G_4=25.$
    This fits the table above since $2k=24=e+4j-4$ and $3k \le 2e+3j-4$. 
\end{example}

We have seen in theorem \ref{thm:sym<e/2} that for $2k\le e$, $\S{k}{j}$ is symmetric if and only if $j=k$. As noted earlier, things are not so simple when $k> e/2$. In fact, we may not always have a symmetric Sally type semigroup. More precisely:

\begin{proposition}
    There is no symmetric Sally type semigroup at the $2n-1$ level. That is, if $$(n-1)(e-2)+(n-1)j < nk \leq (n-1)e+nj-(2n-1),$$ so that $\S{k}{j}$ has Frobenius number and genus $F_{2n-1}$ and $G_{2n-1}$, respectively, then $\S{k}{j}$ is not symmetric.  
\end{proposition}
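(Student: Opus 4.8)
The plan is to convert the symmetry criterion \Cref{eq:2g=f+1} into a single Diophantine relation among $e,j,k$ by plugging in the closed forms of $F_{2n-1}$ and $G_{2n-1}$ from \Cref{thm:table}, and then to show that this relation is incompatible with the two inequalities that are in force: the lower-bound constraint defining the $2n-1$ level, and the structural inequality $j+k\le e-1$ that expresses that $\S{k}{j}$ is a genuine Sally type semigroup (so that $2e-1$ survives as a generator and the width stays $e-1$). First I would substitute into $2G_{2n-1}=F_{2n-1}+1$. A direct expansion makes almost everything cancel: collecting the coefficients of $k$, $j$, $e$ and the constant one finds
\[
2G_{2n-1}-(F_{2n-1}+1)=n(2n-1)(k-j)-n(2n-3)(e-2),
\]
so, dividing by $n>0$, symmetry at the $2n-1$ level is equivalent to the relation $(2n-1)(k-j)=(2n-3)(e-2)$.

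Next I would exploit integrality. In the regime \eqref{eq:2k>e} one always has $j\le k/2<k$, so the $2n-1$ level never collapses to level $1$ and we may take $n\ge 2$; then $2n-1$ and $2n-3$ are consecutive odd integers, hence coprime, and the relation forces $(2n-1)\mid (e-2)$. Writing $e-2=(2n-1)m$ yields $k-j=(2n-3)m$ with the \emph{same} integer $m$. Feeding these two identities into the remaining hypotheses is then purely mechanical. Substituting into the level lower bound $(n-1)(e+j-2)<nk$, the coefficient of $m$ telescopes through $(n-1)(2n-1)-n(2n-3)=1$, leaving $m<j$. Substituting into $j+k\le e-1$ gives $2j+(2n-3)m\le (2n-1)m+1$, i.e. $2j\le 2m+1$, and since $j$ is an integer this forces $j\le m$. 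The two conclusions $m<j$ and $j\le m$ contradict one another, so the symmetry relation has no admissible solution and $\S{k}{j}$ is not symmetric. I would also note in passing the degenerate case $n=1$ (not occurring here), where the relation reads $k-j=2-e<0$, contradicting $k-j\ge k/2>0$.

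I expect the genuine obstacle to be conceptual rather than computational: over the reals the relaxed lower-bound constraint and the symmetry relation are mutually consistent, so no contradiction appears until the divisibility and parity data are brought to bear. The decisive points are (i) that coprimality of $2n-1$ and $2n-3$ upgrades the symmetry relation to the clean statement $e-2=(2n-1)m$, aligning both substitutions to a common $m$, and (ii) that it is exactly the defining inequality $j+k\le e-1$ which pushes $2j\le 2m+1$ across the integer boundary to $j\le m$, tipping the system from barely feasible to infeasible. Everything else reduces to routine linear bookkeeping in the three parameters $e,j,k$, and I would keep those calculations terse.
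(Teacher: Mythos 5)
Your proof is correct and takes essentially the same route as the paper's: both reduce symmetry at the $2n-1$ level to the relation $(2n-1)(k-j)=(2n-3)(e-2)$, use coprimality of the consecutive odd numbers $2n-1$ and $2n-3$ to write $e-2=(2n-1)m$ and $k-j=(2n-3)m$ for a common integer ($m$ for you, $t$ in the paper), and then derive the contradictory pair $m<j$ (from the level lower bound) and $j\le m$ (from the Sally constraint). If anything, your final step is slightly more careful than the paper's, which invokes $j+k\le e-2$ even though the definition only guarantees $j+k\le e-1$; your integrality argument $2j\le 2m+1\Rightarrow j\le m$ closes that small gap.
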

\begin{proof}
Suppose there is an $n$ such that $(n-1)(e-2)+(n-1)j <nk \le (n-1)e + nj - (2n-1)$. This implies
\begin{equation}\label{eq:nosym2n-1jnot1}
    (n-1)j < nk-(n-1)(e-2) < nj
\end{equation}

so $j\neq 1$. In this case, $F_{2n-1} = ne+nj+nk-1; G_{2n-1} = n^2k-n(n-1)j -(n-1)^2e +(n-1)(2n-1)+e-1$. Then $\S{k}{j}$ will be symmetric if and only if $F_{2n-1}+1 = 2G_{2n-1}$, i.e.,
\begin{align*}
   &  ne+nj+nk= 2(n^2k-n(n-1)j-(n-1)^2e +(n-1)(2n-1)+e-1) \\
   \Leftrightarrow \; & e(2n^2-4n+2+n-2)= (2n^2-n)(k-j)+ 2(2n^2-3n) \\
   \Leftrightarrow \; & (2n-1)(k-j) = (2n-3)(e-2).
\end{align*}
So, $e-1= (2n-1)t$ for some $t \in \N$ and $ k = (2n-3)t+j$. Putting this in \eqref{eq:nosym2n-1jnot1}, we get  $(n-1)(2n-1)t-n(2n-3)t<j \implies t<j$. But $j+k \leq e-2 \implies 2 \leq  e-k =2t+2-j \implies j \leq t$, a contradiction.
\end{proof}

From \Cref{thm:table}, there is one remaining possibility: $$(n-1)e+nj-(2n-1)<nk; (n+1)k\le n e+nj-2n.$$ Rewrite this to get 
\begin{equation}\label{eq:2ncaseineq}
    \frac{n-1}{n} \leq  \frac{k-j}{e-2} <\frac{n}{n+1}.
\end{equation} 
We can now characterize exactly when these $\S{k}{j}$ are symmetric.
\begin{theorem}\label{thm:symk>e/2jnot1}
Let $2k>e$.  For any given $e, j < k$, $0< \frac{k-j}{e-2} <1$ since $e-k \ge 2$ and $2k>e$. So, there exists $n$ depending on $k, j, e$ such that $\frac{n-1}{n}\leq \frac{k-j}{e-2} <\frac{n}{n+1}$.  Then,  $\S{k}{j}$  is symmetric if and only if   $n(k-j) = (n-1)(e-2)$.  
\end{theorem}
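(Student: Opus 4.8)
The plan is to read symmetry straight off the Frobenius number and genus furnished by \Cref{thm:table}, using the criterion \eqref{eq:2g=f+1} that a numerical semigroup is symmetric exactly when $2g=F+1$. The defining relation $\frac{n-1}{n}\le\frac{k-j}{e-2}<\frac{n}{n+1}$ is precisely the constraint \eqref{eq:2ncaseineq}, which the discussion preceding the statement identifies with the ``$2n$'' row of the table; so for the triples in play one has $F=F_{2n}=(n+1)e+nj+nk-1$ and $g=G_{2n}=n(n+1)k-n^2j-n(n-1)e+n(2n-1)+e-1$. First I would record this identification and note that the preceding proposition already excludes the odd rows $2n-1$ from supporting any symmetric semigroup, so only the even row is active.

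The heart of the argument is then a single substitution. Plugging $F_{2n}$ and $G_{2n}$ into $2g-(F+1)$ and collecting the coefficients of $k$, $j$, $e$ and the constant term, I expect a common factor of $2n+1$ to drop out, giving
$$2G_{2n}-(F_{2n}+1)=(2n+1)\big[\,n(k-j)-(n-1)(e-2)\,\big].$$
Since $2n+1>0$, this vanishes if and only if $n(k-j)=(n-1)(e-2)$, the asserted criterion. For the forward direction of the biconditional one extra remark is needed: if $\S{k}{j}$ is symmetric it cannot occupy an odd row (preceding proposition), hence lies on some even row $2m$; symmetry then forces $m(k-j)=(m-1)(e-2)$, and since membership in row $2m$ pins $\frac{k-j}{e-2}$ to the half-open interval $[\frac{m-1}{m},\frac{m}{m+1})$, uniqueness of that interval yields $m=n$.

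The routine part is the factorization above; the step needing genuine care is confirming that the triples governed by the stated $n$ really sit on row $2n$ rather than drifting onto a neighbouring odd row. Here the standing hypotheses \eqref{eq:2k>e}, and in particular $k<e-3$, do the work: at the boundary $n(k-j)=(n-1)(e-2)$ one checks that $k<e-3$ forces $n(j+1)<e-2$, hence $e-2>nj$, which is exactly the inequality guaranteeing the upper row-$2n$ constraint $(n+1)(k-j)\le n(e-2)-j$; thus such a triple genuinely lies on row $2n$ and the computed $F_{2n},G_{2n}$ apply. The same bound also shows that the equation $n(k-j)=(n-1)(e-2)$ can never be met by a triple that has slipped onto an odd row, keeping the biconditional consistent with the preceding non-symmetry proposition. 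I therefore expect the only real obstacle to be this bookkeeping of which row of \Cref{thm:table} is active, with the symmetry computation itself reducing to the short factorization displayed above.
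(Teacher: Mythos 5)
Your proposal is correct and takes essentially the same route as the paper: the paper likewise reads off $F_{2n}$ and $G_{2n}$ from \Cref{thm:table}, reduces $2g=F+1$ to $n(k-j)=(n-1)(e-2)$ (your factor of $2n+1$ is right; the paper's intermediate displayed line has a typo writing $2n-1$, but reaches the same conclusion), and then verifies that a triple satisfying this equation genuinely satisfies both row-$2n$ inequalities of the table, which is exactly your bookkeeping step $k<e-3 \Rightarrow j<t$ after writing $e-2=nt$, $k-j=(n-1)t$. Your explicit treatment of the forward direction---excluding odd rows via the preceding proposition and using disjointness of the intervals $\left[\frac{m-1}{m},\frac{m}{m+1}\right)$ to force $m=n$---is left implicit in the paper's write-up, so the two arguments coincide in substance.
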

\begin{proof}
First suppose there is an $n$ satisfying \eqref{eq:2ncaseineq}. So $F_{2n} = (n+1)e+nj+nk-1;  G_{2n} = n(n+1)k -n^2j -n(n-1)e+n(2n-1)+e-1$. Hence, we have symmetry if and only if 
\begin{align*}
   & (n+1)e+nj+nk= 2 (n(n+1)k -n^2j -n(n-1)e+n(2n-1)+e-1) \\
   \Leftrightarrow \; & (2n-1)n(k-j)+(2n-1)2(n-1) =  (n+1+2n^2-2n-2)e \\
    \Leftrightarrow \; & n(k-j) =(n-1)(e-2)
\end{align*}

This means $e-2= nt, k-j = nt-t$ for some $t \in \N$. So, $k= e-2-t+j = e-2-(t-j)$.  In particular, $t-j\ge 2$. Now we check that such an $n$ does exist: $(n-1)e-(2n-1) = (n-1)(nt+2)-(2n-1) =(n-1)nt-1 =n(k-j)-1 < n(k-j$, thus, satisfying the first inequality at the $2n$ level in \Cref{thm:table}. For the second inequality,  $(n+1)k = (n+1)((n-1)t+j) =  n^2t +nj + j-t \le  n(nt+2)+nj-2n ne+nj -2n$ if $j\le t$. Since $t-j \ge 0$, this is certainly possible.

This finishes the proof.
\end{proof}

\begin{example} \begin{enumerate}
    \item Observe that $S_{12}^{20}(2)$ from Example \ref{eg:2k=e+4j-4} is at level $2n$ for $n=2$  and is not symmetric.
    \item Take $e=26, j=4,k=16$, so $S_{16}^{26}(4)=\la 26,27,28,29,50,51 \ra$ is at level $2n$ for $n=3$. Here, $t=8$, and $S_{16}^{26}(4)$ is symmetric with Frobenius number $F_{6}=175$ and genus $G_6=88$.
\end{enumerate}
    
\end{example}

Thus, when $2k>e$, we may not always have a symmetric Sally type semigroup. When we do, that is, when $F = (n+1)e+nj+nk-1$ and $n(k-j)= (n-1)(e-2)$, we can write the structure of the ideal and the resolution in a similar manner to the case $2k\le e$.   

First, some more notation analogous to \Cref{not:A} to avoid clutter in the following proofs: From \Cref{thm:symk>e/2jnot1}, there exists $n_k$ satisfying \eqref{eq:2ncaseineq}. Denote the semigroup $\S{k}{j}$ at the $2n_k$ level by $S(n_k)$. Let $R(n_k):=\Re{k}{j}$ be the polynomial ring, and $k[S{n_k)}] \cong  \frac{R(n_k)}{I(n_k)}$ the corresponding semigroup ring with defining ideal $I(n_k):= \Ie{k}{j}$. Recall once more that $X_{e+2i} = X_i^2; X_{e+2i+1} = X_iX_1$ for all $i\ge 0$. 
 
\begin{theorem} Let $n_k$ be such that $ \frac {n}{n-1} <\frac {k-j}{e-2} < \frac{n}{n+1}$.  Then the defining ideal of $k[S(n_k)]$ is given by $I(n_k) = I_2(A^e_k(1))+I_2(B^e(n_k)$ where $$A^e(n_k) = \begin{bmatrix} X_0& \ldots &X_{j-2} &   X_{j+k} & \cdots  & X_{e-1} \\ X_{1}& \ldots &X_{j-1}& X_{j+k+1}& \cdots  & X_0^2 \end{bmatrix}$$ and  $$   B^e(n_k) =   \begin{bmatrix} X_{j-1}^{2n-1} & X_{k+j} & X_{k+j+1} & \cdots & X_{e+j-3} \\ X_{j+k}^{n} & X_{k+2j+1} & X_{k+2j+2} & \cdots & X_{e+2j-2}\\ \end{bmatrix}$$  

The minimal resolution of $k[S(n_k)]$ is the mapping cylinder of $\psi: \mathbf E^*(n_k)\to \mathbf E(n_k)$ where $\mathbf E(n_k)$ is the Eagon Northcott resolution of $R(n_k)/I_2(A^e(n_k))$, $\mathbf E^*(n_k)$ is its dual, and $\psi$ is the complex map induced by $\psi_0: R(n_k)^{e+k-2} \to R(n_k)$ given by the $2\times 2$ minors of $B^e(n_k)$ involving the first column. Thus, the Betti numbers are $$\beta_t   = t\displaystyle {e-k-1 \choose t+1}+(e-k-1-t)\displaystyle{e-k-1 \choose t-1}, \q t\le e-k-2.$$ and $\beta_{e-k-1} =1$. 
\end{theorem}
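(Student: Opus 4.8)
The plan is to establish the two assertions — the determinantal description of $I(n_k)$ and the mapping-cone resolution — in parallel with the $k\le e/2$ case (Theorems \ref{thm:kleqe/2ideal} and \ref{thm:kleqe/2res}), to which the present statement specializes when $n=1$ and $j=k$. For the ideal I would first verify the easy containment $J:=I_2(A^e(n_k))+I_2(B^e(n_k))\subseteq I(n_k)$ by the weighted-degree bookkeeping used throughout: every column of $A^e(n_k)$ has weighted-degree difference $1$, while every column of $B^e(n_k)$ has difference $j+1$. The only nontrivial check is the first column of $B^e(n_k)$, where the symmetry relation $n(k-j)=(n-1)(e-2)$ from \Cref{thm:symk>e/2jnot1} is precisely what forces $\deg X_{j+k}^{n}-\deg X_{j-1}^{2n-1}=j+1$, matching the remaining columns. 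Hence every minor is a homogeneous binomial and $J\subseteq I(n_k)$. To upgrade this to equality I would invoke Gastinger's criterion (\Cref{thm:J=I}) and show $\dim_k R(n_k)/(J,X_0)=e$.

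The crux is exhibiting an explicit $k$-basis of $R(n_k)/(J,X_0)$ of cardinality $e$ (the multiplicity, equivalently the size of the Ap\'ery set of $\S{k}{j}$ with respect to $e$). As in the proof of \Cref{thm:kleqe/2ideal}, setting $X_0=0$ and repeatedly applying the Hankel relations coming from $I_2(A^e(n_k))$ collapses products $X_iX_{j'}$ of large index sum onto multiples of a single ``tower'' variable $X_{j-1}$, and the first-column minors of $B^e(n_k)$ — now carrying the powers $X_{j-1}^{2n-1}$ and $X_{j+k}^{n}$ rather than the linear entries $X_{k-1},X_{2k}$ of the $n=1$ case — dictate exactly where this tower terminates. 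I expect the basis to consist of $1$, the linear monomials $X_i$, and a graded family of monomials of the form $X_iX_{j-1}^{p}$ with the exponent $p$ ranging up to an order governed by $2n$, capped off by a one-dimensional socle. Verifying that the total count is exactly $e$ is the main obstacle, since here the tower of admissible powers of $X_{j-1}$ has height depending on $n$, in contrast to the single extra level that sufficed when $n=1$.

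For the resolution I would follow \Cref{thm:kleqe/2res} verbatim in structure. Since $A^e(n_k)$ is again a $2\times(e-k-1)$ Hankel-type matrix of height $e-k-2$, its Eagon--Northcott complex $\mathbf E(n_k)$ resolves $R(n_k)/I_2(A^e(n_k))$, and I would build the comparison map $\psi:\mathbf E^\ast(n_k)\to\mathbf E(n_k)$ from the first-column minors of $B^e(n_k)$,
$$\psi_0(g_1^{i}g_2^{\,e-k-3-i})=(-1)^i\bigl[X_{j-1}^{2n-1}X_{k+2j+1+i}-X_{j+k}^{n}X_{k+j+i}\bigr],\qquad 0\le i\le e-k-3,$$
well-definedness being immediate from $k+2j+1+(e-k-3)=e+2j-2$. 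The same telescoping computation as before, applied to $\psi\circ\delta^{\ast}_{e-k-2}$, lands in $I_2(A^e(n_k))$ because only relations among the Hankel entries are used, with $X_{j-1}^{2n-1}$ and $X_{j+k}^{n}$ playing the roles formerly played by $X_{k-1}$ and $X_{2k}$. The mapping cone of $\psi$ then resolves $R(n_k)/I(n_k)$; minimality holds because every entry of every map lies in the maximal ideal (the powers $X_{j-1}^{2n-1}$ and $X_{j+k}^{n}$ only \emph{raise} standard degree), and the terminal map is forced to be minimal by Cohen--Macaulayness in codimension $e-k-1$. Finally, since $\mathbf E(n_k)$ and $\mathbf E^\ast(n_k)$ depend only on the common size $e-k-1$ of $A^e(n_k)$ and not on $n$ or $j$, the Betti numbers produced by the mapping cone coincide exactly with those of the $k\le e/2$ case, giving the stated formula — which accounts for the ``surprisingly identical Betti numbers'' noted in the introduction.
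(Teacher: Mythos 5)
Your overall architecture is the same as the paper's: prove $J:=I_2(A^e(n_k))+I_2(B^e(n_k))\subseteq I(n_k)$ by weighted-degree bookkeeping (your observation that the symmetry relation $n(k-j)=(n-1)(e-2)$ from \Cref{thm:symk>e/2jnot1} is exactly what gives the first column of $B^e(n_k)$ the common degree-difference $j+1$ is the same computation the paper carries out on the two exceptional minors), then apply Gastinger's criterion (\Cref{thm:J=I}) via a dimension count for $R(n_k)/(J,X_0)$, and finally obtain the resolution as the mapping cylinder of $\psi:\mathbf E^\ast(n_k)\to\mathbf E(n_k)$. Your treatment of $\psi_0$, the telescoping check, and minimality is in fact more explicit than the paper's, which disposes of that half with a one-line reference to \Cref{thm:kleqe/2res}.

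The genuine gap is in the step you yourself flag as ``the main obstacle,'' and your proposed way through it would fail. Your ansatz for a $k$-basis of $R(n_k)/(J,X_0)$ --- constants, linear monomials, and a \emph{single} tower $X_iX_{j-1}^{p}$ capped by a socle --- is not the right one. The paper's basis has \emph{two} towers:
\begin{equation*}
\{1\}\cup\{X_i\}\cup\{X_iX_{j-1}^{u} : 1\le i\le j-1,\ 1\le u\le 2n-1\}\cup\{X_iX_{j+k}^{u} : j+k\le i\le e-1,\ 1\le u\le n-1\}
\end{equation*}
together with one socle element, and the count
\begin{equation*}
(e-k)+(2n-1)(j-1)+(n-1)(e-j-k)+1=n(e-2)-n(k-j)+2=e
\end{equation*}
is precisely where the symmetry hypothesis enters. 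The tower on $X_{j+k}$ is unavoidable: modulo $X_0$, the Hankel relations from $A^e(n_k)$ can only shuffle a product $X_aX_b$ with $a,b\ge j+k$ among indices $\ge j+k$, and the relations from $B^e(n_k)$ collapse only the $n$-th power $X_{j+k}^{n}$ (against $X_{j-1}^{2n-1}$), so the classes $X_iX_{j+k}^{u}$ with $u<n$ survive as independent basis elements --- they are the unique monomials in their weighted degrees. Concretely, take $e=14$, $j=2$, $k=8$, $n=2$ (so $n(k-j)=12=(n-1)(e-2)$ holds, and the semigroup is $\langle 14,15,24,25,26,27\rangle$): the Ap\'ery set is $\{0,15,24,25,26,27,30,45,60,75,48,49,50,51\}$, matching the two-tower basis $1$, $X_1,X_{10},\dots,X_{13}$, $X_1^2,X_1^3,X_1^4$, $X_{10}^2,X_{10}X_{11},X_{10}X_{12},X_{10}X_{13}$, socle $X_1^5$, of cardinality $14=e$; your single-tower basis gives only $6+3+1=10$ elements (it misses $48,49,50,51$, e.g.\ $X_{10}^2=t^{48}$ cannot be rewritten as a multiple of $X_1$ since $48-15=33$ is a gap). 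With a deficient count Gastinger's criterion cannot be invoked, so the equality $J=I(n_k)$ --- and with it the identification of the resolution and the Betti numbers --- is not established by your argument as written.
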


\begin{proof} We follow the proof idea of the case $k \leq e/2$. First, we show $J:=I_2(A(n_k))+I_2(B(n_k)) \subseteq I(n_k)$. We only need to check the minors $\begin{bmatrix}
    X^{2n-1}_{j-1} & X_{k+j} \\ X_{j+k}^n & X_{k+2j+1} 
\end{bmatrix}$ and $\begin{bmatrix}
    X^{2n-1}_{j-1} & X_{k+j} \\ X_{j+k}^n & X_{k+2j+1} 
\end{bmatrix}$. The rest of them are in $I(n_k)$ because of homogenous degree considerations. For the first minor, $(e+j-1)(2n-1)+(e+k+2j+1) - (e+j+k)(n+1) = (e-2)(n-1)+ en-j + e+k+2j - (k-j)n-en -e-j-k= 0$. Further, for the second minor, $(e+j-1)(2n+1) - (e+j+k)n -( 2e+j-3) = e(n-1) +j(2n+1-n-1)-(2n+1) -nk +3= (n-1)(e-2)-n(k-j)=0$. Thus, we have $J \subset I(S)$.  
 
Next, we will check that the dimension of the $k$-vector space $R(n_k)/(J+(X_0))$ is $e$ by seeing that
\begin{multline*}
    T:=\{1\} \cup \{X_i \mid i \in [1,2e-1]\bs[j,j+k]\} \cup \{X_iX_{j-1}^u \mid 1\le i\le j-1,1\le u\le 2n-1\}\cup \\ \{X_{i} X_{j+k}^u \mid j+k\le i\le e-1, 1\le u\le n-1\} \cup \{ X_1X_{j-1}^{2n} \text { or } X_{j-2}X_{j-1}^{2n}\}
\end{multline*}
is a basis. This can be done as in the proof of \Cref{thm:kleqe/2ideal}. For instance, from the matrix $A^e(n_k)$, we have
\begin{equation*}
    X_aX_b \in J+(X_0) \; \forall \;  a\le j-1, b\ge j+k \text{ and } \; \forall \;  a+b\le j-1; \q
    X_{j+k+a}X_{b}\in J+(X_0), \q a+b\ge e.
\end{equation*}
We therefore need the first four sets in a basis.  Finally, if $j <4$, then we will need $X_{j-2}X_{j-1}^{2n}$ and if $j>4,$ then we will need $X_1X_{j-1}^{2n}$. Since all the other monomials are in $J+(X_0)+T$, $T$ is indeed a basis, and we have
\begin{multline*}
    \dim_k \left(\frac{R(n_k)}{J+(X_0)}\right)=  e-k+(2n-1)(j-1)+(e-j-k)(n-1)+1 = ne+nj-nk -2n+2 \\ = n(e-2)-n(k-j)+2 = e-2+2= e 
\end{multline*}

Thus, by Gastinger's criterion (\Cref{thm:J=I}), we get $I(n_k)= I_2(A^e(n_k))+I_2(B^e(n_k))$.

One can similarly check that the computation of the resolution and Betti numbers is analogous to those for $\S{k}{k}$ (see \Cref{thm:kleqe/2res}).
 
\end{proof}

\vspace{1cm}

\subsection{Symmetric Sally Type Semigroups when $k>e/2$ and $j=1$.}
Let $\S{k}{1} = \la e,e+k+1, \ldots, 2e-1 \ra$. Assume $k \le e-3$ so that the embedding dimension is at least $3$. There exists $n\ge 1$ such that $\frac{n-2}{n-1} <\frac {k}{e-1} \le \frac{n-1}{n}$. Let us denote the semigroup $\S{k}{1}$ with this $n$ by $S(n_{k})$. 
\begin{theorem}\label{thm:symk>e/2j=1}
    Let $S(n_k) =\la e,e+k+1, \dots, 2e-1 \ra$ be the semigroup $\S{k}{1}$ with $\frac{n-2}{n-1} <\frac {k}{e-1} \le \frac{n-1}{n}$. Then its frobenius number and genus are $$F = ne+(n-1)k+(n-2); \q G= n(n-1)k-(n^2-3n+1)(e-1).$$   
Further, $S(n_k)$ is symmetric if and only if $(n-1)k = (n-2)e-(n-3)$.
    \end{theorem}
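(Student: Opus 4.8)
The plan is to compute the Frobenius number and genus of $\S{k}{1}$ explicitly, following the same gap-counting philosophy used in \Cref{thm:table}, and then impose the symmetry condition \eqref{eq:2g=f+1} to extract the stated Diophantine characterization. The structural difference from the $j\neq 1$ case is that now $[e,e+j-1]=[e,e]$ is a single point, so the pattern of nongap-intervals and gap-intervals shifts: \Cref{lem:smallgaps} gives $[1,e-1]\cup[e+k,e+2k-1]\subseteq G$ (with $j=1$) and $[2e,2e]\subseteq \S{k}{1}$, and one must track how the intervals $[me+(m-1)k+\text{const},\,\ldots]$ alternate between lying in the semigroup and containing gaps as $m$ grows. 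I would set up a number-line picture analogous to \Cref{fig:numberline} but specialized to $j=1$, identify the stage $n$ at which the consecutive-nongap block first reaches length $e$, and read off $F=ne+(n-1)k+(n-2)$ at that stage together with the accumulated genus.

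First I would verify the stated $F$ and $G$ formulas by induction on the stage, exactly paralleling the induction in \Cref{thm:table}. Concretely I would show that at stage $m$ the first gap-block beyond the previous nongap interval has cardinality $g_{2m-1}=mk-(m-1)(e-1)-(\text{linear correction})$ and the next one $g_{2m}=\ldots$, so that the running genus sums telescope to $G=n(n-1)k-(n^2-3n+1)(e-1)$; the hypothesis $\frac{n-2}{n-1}<\frac{k}{e-1}\le\frac{n-1}{n}$ is precisely the condition that makes stage $n$ the terminal one, i.e. the inequalities $g_{2n-1}>0$ (equivalently $(n-1)k<(n-1)(e-1)$ rearranged) and the block-reaches-length-$e$ condition both hold there and fail before. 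These are routine but must be checked against the boundary cases of the ratio inequality.

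With $F$ and $G$ in hand, the symmetry statement is a direct substitution into $F+1=2G$. Writing out
\begin{equation*}
ne+(n-1)k+(n-1)=2\bigl(n(n-1)k-(n^2-3n+1)(e-1)\bigr)
\end{equation*}
and simplifying should collapse, after cancelling the $k$- and $e$-coefficients, to the single relation $(n-1)k=(n-2)e-(n-3)$; I would carry out this linear algebra carefully, grouping the $e$-terms and $k$-terms separately. As in the proof of \Cref{thm:symk>e/2jnot1}, I would then confirm that the derived relation is actually attainable within the ratio window $\frac{n-2}{n-1}<\frac{k}{e-1}\le\frac{n-1}{n}$ and compatible with the standing constraint $k\le e-3$, so that the ``if'' direction is not vacuous.

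The main obstacle I anticipate is bookkeeping at the two ends of the half-open interval $\frac{n-2}{n-1}<\frac{k}{e-1}\le\frac{n-1}{n}$: the asymmetric strict/weak inequalities signal that one endpoint is a genuine transition between consecutive stages, and getting the off-by-one constants in $F$ and $G$ correct (the $+(n-2)$ in $F$, the $(n^2-3n+1)$ coefficient in $G$) depends on resolving exactly which stage's gap-block vanishes at the boundary. The $j=1$ degeneracy also means the interval $[e,e+j-1]$ collapses, so formulas obtained by naively setting $j=1$ in \Cref{thm:table} need independent re-derivation rather than specialization; I would treat the $j=1$ number line from scratch to avoid inheriting a spurious term. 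Once the gap cardinalities are pinned down, the remainder is straightforward algebra.
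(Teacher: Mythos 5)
Your plan coincides with the paper's own proof: the paper likewise derives $F$ and $G$ by induction on the stage $n$, doing a from-scratch number-line/gap-block analysis specialized to $j=1$ (with base case $n=2$, i.e.\ $2k\le e-1$, where $F=2e+k$, $G=e-1+2k$), with the window $\frac{n-2}{n-1}<\frac{k}{e-1}\le\frac{n-1}{n}$ playing exactly the role you describe of making stage $n$ terminal, and then obtains the symmetry criterion by substituting into $F+1=2G$ and checking attainability via $(n-2)e-(n-3)=(n-2)(e-1)+1$. Your outline is correct and is essentially the same argument, modulo carrying out the interval-length and gap-count computations you defer as routine.
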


    \begin{proof}
        This proof is by induction on $n$. When $n=2$, that is, when $2k\le e-1$, we already know that $S(n_k)$ is symmetric if and only if $k=1$.  
        We can also see from the proof of \ref{thm:sym<e/2} that in this case, $F= 2e+j+k-1= 2e+k$ and $G= e-1+2k$. 
        
        Assume the result is true for some $n-1$. So $G \ge (n-1)(n-2)k-((n-1)^2-3(n-1)+1)(e-1)$. Now suppose 
        $$(n-1)k > (n-2)(e-1) \text { and } nk\le (n-1)(e-1).$$ 
        Further, $[(n-1)e+(n-2)k+n-2, (2n-3)e] \subset S$ has length $e+(n-3)e-(n-2)k -(n-3)$.  This length is $< e$ because
        $\frac{k}{e-1} > \frac {n-2}{n-1} > \frac {n-3}{n-2}$.  However, the next set of gaps is non-empty as $(n-1)e+(n-1)k+(n-1) -((2n-3)e+1)= (n-1)k-(n-2)(e-1) >0$.  So, $F \ge (n-1)e+(n-1)k+(n-2)$ and $G \ge (n-1)(n-2)k-((n-1)^2-3(n-1)+1)(e-1)+(n-1)k-(n-2)(e-1)$.  
        We note that the length of the next interval in the semigroup $[(n-1)e+(n-1)j+(n-1), (2n-2)e]$ also falls short of $e$ as $(n-1)k > (n-2)(e-1)$. The next set pf gaps between $2n-2$ and $ne+(n-1)j+(n-1)k$ is of length 
        $(n-1)k-(n-2)(e-1)$ which is positive for the same reason. The next interval $[ne+(n-1)k+(n-1), (2n-1)e]\subseteq S(n_k)$ yet  again falls short but then the gaps from $(2n-1)e$ to $ne+nk+n$  disappear for $ne+nk+n \le (2n-1)e+1 $  when $nk\le (n-1) (e-1)$.
        So, we get $F = ne+(n-1)k+(n-2)$ and 
        $G = (n-1)(n-2)k-((n-1)^2-3(n-1)+1)(e-1)+2(n-1)k-2(n-2)(e-1) = n(n-1)k-(n^2-3n+1)(e-1)$ as desired.  
        Thus we have the formulae for $F$ and $G$. 

        Now, $S(n_k)$ is symmetric if and only if $F+1= 2G$.  This happens if $$ ne+(n-1)k+(n-1) =  2n(n-1)k-2(n^2-3n+1)(e-1)$$

That is, $$ (2n^2-5n+2)e =  (2n-1)(n-1)k+ (2n^2-6n+2-n+1).$$  But $(2n^2-5n+2= (2n -1)(n-2)$ and $2n^2-7n+3= (2n-1)(n-3)$.
Hence $S(n_k)$ is symmetric if and only if $(n-1)k = (n-2)e-(n-3)$ which is just possible as $(n-2)e-(n-3)= (n-2)(e-1)+1$. 
    \end{proof}
\begin{example}
Here we give some examples for these symmetric $S(n_k)$.  Let $n=7$.  If the embedding dimension is $4$, then we get that $\la 20, 37,38,39 \ra$ is the only symmetric Sally type semigroup of the form $S(n_k)$. If the embedding dimension is $5$, then the only symmetric one is $S(n_k)=\la 26, 48, 49, 50, 51 \ra$. Indeed, if we fix $n$, there is only one symmetric Sally type semigroup in every embedding dimension. 
\end{example}

We will complete this analysis for $j=1$, by giving the structure of the ideal as well as the minimal resolution. 
    
    \begin{theorem}
The ideal defining the semigroup ring $k[S(n_k)]$ is given by $I(n_k)=I_2(A^e(n_k))+I_2(B^e(n_k))$ where 
$$A^e(n_k) = \begin{bmatrix}
      X_{k+1} & X_{j+k} & \cdots  & X_{e-1} \\
     X_{K+2} & X_{j+k+1}& \cdots  & X_0^2
        \end{bmatrix}$$
        and 
 $$   B^e(n_k) =   \begin{bmatrix}
 X_{0}^{2n-3} & X_{k+1} & X_{k+2} & \cdots & X_{e-2} \\
 X_{k+1}^{n-1} & X_{k+3} & X_{k+4} & \cdots & X_{0}^2=X_{e}\\
        \end{bmatrix}$$  
        
        The minimal resolution of $k[S(n_k)]$ is the mapping cylinder of $\psi: \mathbf E^*(n_k)\to \mathbf E(n_k)$ where $\mathbf E(n_k)$ is the Eagon-Northocott resolution of $R(n_k)/I_2(A^e(n_k))$ above and $\psi$ is the complex map induced by $\psi_0: (R(n_k))^{e+k-2} \to R(n_k)$ given by the $2\times 2$ minors of $B^e(n_k)$ involving the first column. 
        The Betti numbers of $k[S(n_k)]$ are $$\beta_t   = t\displaystyle {e+k-1 \choose t+1}+(e-k-1-t)\displaystyle{e-k-1 \choose t-1}.$$
    \end{theorem}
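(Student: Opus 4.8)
The plan is to mirror exactly the two-step strategy already carried out in \Cref{thm:kleqe/2ideal} and \Cref{thm:kleqe/2res}, and in the $j\neq 1$ theorem preceding this one. The whole result splits into an \emph{ideal-structure} part and a \emph{resolution/Betti-number} part, and I would establish them in that order.

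For the ideal structure, I would first verify the inclusion $J:=I_2(A^e(n_k))+I_2(B^e(n_k)) \subseteq I(n_k)$. Every $2\times 2$ minor coming from two ``generic'' columns is homogeneous of the correct weighted degree automatically, so as in the previous proofs only the minors involving the first column of $B^e(n_k)$ — the ones carrying the powers $X_0^{2n-3}$ and $X_{k+1}^{n-1}$ — need a degree check. I would compute these weighted degrees explicitly and use the symmetry relation $(n-1)k=(n-2)e-(n-3)$ (equivalently the defining inequality $\frac{n-2}{n-1}<\frac{k}{e-1}\le\frac{n-1}{n}$ from \Cref{thm:symk>e/2j=1}) to confirm the two entries of each first-column minor have equal weight, forcing that minor into $I(n_k)$. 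Having $J\subseteq I(n_k)$, I would invoke Gastinger's criterion (\Cref{thm:J=I}) with $x_1=X_0$: it suffices to prove $\dim_k\bigl(R(n_k)/(J,X_0)\bigr)=e$. Following the template of \Cref{thm:kleqe/2ideal}, I would exhibit an explicit spanning set built from $1$, the linear forms $X_i$, and the ``corner'' monomials $X_iX_{k+1}^u$ (and a single top monomial $X_0X_{k+1}^{n-1}$ or analogous), showing via the relations in $I_2(A^e(n_k))$ that all products $X_aX_b$ reduce either into $(J,X_0)$ or to one of these basis elements, and that higher-degree monomials collapse. Counting these gives $\dim_k = e-k + (n-2)(e-k-1)+1 = n(e-1)-(n-1)k+\text{const}$, which reduces to $e$ precisely by the symmetry identity.

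For the resolution, I would note that $A^e(n_k)$ is a $2\times(e-k-1)$ scroll-type matrix of maximal height $e-k-2$, so its Eagon--Northcott complex $\mathbf E(n_k)$ resolves $R(n_k)/I_2(A^e(n_k))$. I would then define the map $\psi_0:\mathcal S_{e-k-3}G\to R(n_k)$ by the first-column minors of $B^e(n_k)$, lift it to a map of complexes $\psi:\mathbf E^*(n_k)\to\mathbf E(n_k)$ by the same formula $\psi(g_1^ig_2^{e-k-3-i})=(-1)^i[\,\cdots\,]$ used in \Cref{thm:kleqe/2res}, and verify the key commutativity identity $\psi\circ\delta^*_{e-k-2}\in I_2(A^e(n_k))$ by the same telescoping computation. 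The mapping cylinder of $\psi$ then resolves $R(n_k)/(I_2(A^e(n_k))+I_2(B^e(n_k)))=R(n_k)/I(n_k)$, and minimality follows from the degree bookkeeping: all Eagon--Northcott differentials are degree $\ge 1$ except $\delta_1$ which is degree $\ge 2$, while $\psi_1$ involves only the $X_0$ and $X_{k+1}$ powers and stays in positive degree; Cohen--Macaulayness of codimension $e-k-1$ forces the final map to be minimal as well. The Betti numbers then read off the Eagon--Northcott ranks, giving the stated binomial formula.

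The main obstacle I anticipate is the dimension count in the Gastinger step, and specifically establishing that the proposed set is genuinely a basis rather than merely spanning. Because the first column of $B^e(n_k)$ now carries \emph{high} powers $X_0^{2n-3}$ and $X_{k+1}^{n-1}$ rather than the degree-one-gap entries of the $k\le e/2$ case, the reduction process cascades through many more monomial degrees before stabilizing, and the corner monomials $X_iX_{k+1}^u$ for $1\le u\le n-1$ must be shown pairwise $k$-independent modulo $(J,X_0)$ by the homogeneity argument (no $f-X_0g$ can be homogeneous unless $g$ has a forbidden weight). Checking that the arithmetic of these weights closes up exactly — so that the count collapses to $e$ via $n(k-j)=(n-1)(e-2)$ in its $j=1$ incarnation $(n-1)k=(n-2)e-(n-3)$ — is where the bulk of the genuine work lies; the resolution half is then essentially formal once the matrices are pinned down, since it is verbatim the construction of \Cref{thm:kleqe/2res}.
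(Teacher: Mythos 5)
Your proposal is correct and follows essentially the same route as the paper's proof: verify the two first-column minors of $B^e(n_k)$ lie in $I(n_k)$ by a weighted-degree computation using the symmetry relation $(n-1)k=(n-2)e-(n-3)$ from \Cref{thm:symk>e/2j=1}, apply Gastinger's criterion (\Cref{thm:J=I}) by exhibiting a $k$-basis of $R(n_k)/(J,X_0)$ of cardinality $e$, and transplant the mapping-cylinder construction of \Cref{thm:kleqe/2res} for the resolution, minimality, and Betti numbers. One small correction: your candidate top monomial $X_0X_{k+1}^{n-1}$ is zero modulo $(J,X_0)$ since $X_0$ is killed, so it cannot serve as a basis element; the paper's basis instead ends with $X_{e-2}X_{k+1}^{n-1}$, and with that replacement your count $e-k+(n-2)(e-k-1)+1=e$ closes up exactly as you describe.
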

\begin{proof}
The proof is once again very similar to the case of $\S{k}{k}$. We will simply check that the minors  $\begin{bmatrix}
    X^{2n-3}_{0} & X_{k+1} \\ X_{k+1}^{n-1} & X_{k+3} 
\end{bmatrix}$ and $\begin{bmatrix}
    X^{2n-3}_{0} & X_{e-2} \\ X_{k+1}^n & X_{0}^2 
\end{bmatrix}$ of $I_2(B^e(n_k))$ are in $I(n_k)$. 
This we do by looking at the homogenous degrees again. For the first minor: $(2n-3)e+e+k+3-n(e+k+1)=(n-2)e-(n-3)-k(n-1)=0$ by \Cref{thm:symk>e/2j=1} as $S(n_k)$ is symmetric. The last minor is $X_0^{2n-1}- X_{k+1}^{n-1}X_{e-2}$.  Now,
$(2n-1)e- (e+k+1)(n-1) -(2e-2) = e(n-2)- k(n-1)-n+1+2=0$. 

Next, for $J:=I_2(A^e(n_k))+I_2(B^e(n_k)$,  $\dim_k\left(\frac{R(n_k)}{(J+X_0)}\right)$ is $e$ because it has a $k$-basis, 
\begin{multline*}
    \{1\} \cup \{X_i \mid i\in[k+1,e-1]\} \cup \{X_{k+p}X_{k+1}^q \mid  1\le p\le e-k-1, 1\le 1\le n-2\} \cup \left\{ X_{e-2}X_{k+1}^{n-1}\right\}
\end{multline*}

The number of elements in this basis is exactly $1+e-k-1+ (e-k-1)(n-2)+ 1= e$.
\end{proof}

\section{Conclusion}
To summarise, for a Sally type numerical semigroup $\S{k}{j}$, the bottom line of the flowchart gives the necessary and sufficient criterion for symmetry.

\begin{figure}[h]
    \centering
\begin{tikzpicture}
    \draw (6.5,10.25) node {When is $\S{k}{j}$ symmetric?};
    \draw[->] (5,10) -- (2,9.5);
    \draw[->] (8.5,10) -- (10,9.5);
    \draw (2,9.25) node {$k \leq e/2$};
    \draw (10,9.25) node {$k>e/2$};
    \draw[->] (1.5,9) -- (0.25,8.25);
    \draw[->] (2.5,9) -- (3,8.25);
    \draw[->] (9.5,9) -- (7,8.25);
    \draw[->] (10.5,9) -- (13,8.25);
    \draw (0.25,8) node {$j \neq 1$};
    \draw (3,8) node {$j=1$};
    \draw (7,8) node {$j \neq 1$};
    \draw (13,8) node {$j=1$};
    \draw[->] (0.25,7.75) -- (0.25,7.25);
    \draw[->] (3,7.75) -- (3,7.25);
    \draw[->] (7,7.75) -- (7,7.25);
    \draw[->] (13,7.75) -- (13,7.25);
    \draw (0.25,7) node {$k=j$};
    \draw (3,7) node {$k=e/2$};
    \draw (7,7) node {$\frac{n-1}{n}= \frac{(k-j)}{e-2} <\frac{n}{n+1}$};
    \draw (13,7) node {$\frac{n-2}{n-1}+1=\frac{k}{e-1}\le \frac{n-1}{n}$};
\end{tikzpicture}
\end{figure}
We end with a few examples of symmetry or non-symmetry of a Sally type semigroup when the elements removed from $[e,2e-1]$ are not consecutive.

\begin{example}
    $$\begin{array}{|c|c|}
    \hline
        \text{Sally Type Semigroup} &  \text{ Symmetric? } \\
        \hline
         \la 10,15,17,18,19 \ra & \text{ Yes } \\
         \la 10,15,16,17,19 \ra & \text{ No } \\
         \hline
         \la 10,11,16,18,19 \ra & \text{ No } \\
         \la 10,11,16,17,19 \ra & \text{ No } \\
         \hline
         \la 10,13,16,18,19 \ra & \text{ No } \\
         \la 10,13,16,17,19 \ra & \text{ Yes } \\
         \hline
    \end{array}$$
\end{example}

\printbibliography

\end{document}